\documentclass[12pt,reqno,a4paper]{amsart}
\usepackage{graphicx}
\usepackage{amssymb,amscd,textcomp}

\usepackage{hyperref}

\DeclareFontFamily{U}{mathb}{\hyphenchar\font45}
\DeclareFontShape{U}{mathb}{m}{n}{
      <5> <6> <7> <8> <9> <10> gen * mathb
      <10.95> mathb10 <12> <14.4> <17.28> <20.74> <24.88> mathb12
}{}
\DeclareSymbolFont{mathb}{U}{mathb}{m}{n}
\DeclareMathSymbol{\llcurly}{3}{mathb}{"CE}

\begin{document}

\let\kappa=\varkappa
\let\eps=\varepsilon
\let\phi=\varphi
\let\p\partial
\let\lle=\preccurlyeq
\let\ulle=\curlyeqprec

\def\Z{\mathbb Z}
\def\R{\mathbb R}
\def\C{\mathbb C}
\def\Q{\mathbb Q}
\def\P{\mathbb P}
\def\HH{\mathsf{H}}
\def\XX{\mathcal{X}}

\def\conj{\overline}
\def\Beta{\mathrm{B}}
\def\const{\mathrm{const}}
\def\ov{\overline}
\def\wt{\widetilde}
\def\wh{\widehat}

\renewcommand{\Im}{\mathop{\mathrm{Im}}\nolimits}
\renewcommand{\Re}{\mathop{\mathrm{Re}}\nolimits}
\newcommand{\codim}{\mathop{\mathrm{codim}}\nolimits}
\newcommand{\Aut}{\mathop{\mathrm{Aut}}\nolimits}
\newcommand{\lk}{\mathop{\mathrm{lk}}\nolimits}
\newcommand{\sign}{\mathop{\mathrm{sign}}\nolimits}
\newcommand{\rk}{\mathop{\mathrm{rk}}\nolimits}

\def\id{\mathrm{id}}
\def\Leg{\mathrm{Leg}}
\def\Jet{{\mathcal J}}
\def\sS{{\mathcal S}}
\def\lcan{\lambda_{\mathrm{can}}}
\def\ocan{\omega_{\mathrm{can}}}
\def\bgamma{\boldsymbol{\gamma}}

\renewcommand{\mod}{\mathrel{\mathrm{mod}}}

\newtheorem{mainthm}{Theorem}
\renewcommand{\themainthm}{{\Alph{mainthm}}}
\newtheorem{thm}{Theorem}[section]
\newtheorem{lem}[thm]{Lemma}
\newtheorem{prop}[thm]{Proposition}
\newtheorem{cor}[thm]{Corollary}

\theoremstyle{definition}
\newtheorem{exm}[thm]{Example}
\newtheorem{rem}[thm]{Remark}
\newtheorem{df}[thm]{Definition}
\newtheorem*{que}{Question}

\numberwithin{equation}{section}

\title{Interval topology in contact geometry}
\author[Chernov \& Nemirovski]{Vladimir Chernov and Stefan Nemirovski}
\thanks{This work was partially supported by a grant from the Simons Foundation (\#\,513272 to Vladimir Chernov).
The second author was partially supported by CRC/TRR~191 of the DFG and by RFBR grant \textnumero 17-01-00592-a}
\address{Department of Mathematics, 6188 Kemeny Hall,
Dartmouth College, Hanover, NH 03755-3551, USA}
\email{Vladimir.Chernov@dartmouth.edu}
\address{%
Steklov Mathematical Institute, Gubkina 8, 119991 Moscow, Russia;\hfill\break
\strut\hspace{8 true pt} Mathematisches Institut, Ruhr-Universit\"at Bochum, 44780 Bochum, Germany}
\email{stefan@mi-ras.ru}

\begin{abstract}
A topology is introduced on spaces of Legendrian submanifolds and groups of contactomorphisms. 
The definition is motivated by the Alexandrov topology in Lorentz geometry.
\end{abstract}

\maketitle

\section{Introduction}
Let $(Y,\xi)$ be a contact manifold with a co-oriented 
contact structure. 
On every connected component $\mathcal{C}$ of the group 
of contactomorphisms of $Y$ or of the space of Legendrian 
submanifolds in $Y$,
there is a binary partial relation defined by setting
$$
a\llcurly b
$$
if there is a {\it positive\/} isotopy from $a$ to $b$. 
The family of intervals
$$
(a,b):=\{z\in\mathcal{C} \mid a\llcurly z\llcurly b\}
$$
with respect to this relation generates the 
{\it interval topology\/} on~$\mathcal{C}$. 

In Lorentz geometry, the topology generated by the family of intervals 
with respect to the chronology relation~$\ll$ on a spacetime $X$ was
considered in~\cite{Pi, KrPe} and became known
as the {\it Alexandrov topology}. 
For a sufficiently nice spacetime, the Alexandrov topology
is the pull-back of the interval topology 
on Legendrian spheres in the contact manifold of null geodesics~$\mathfrak{N}_X$
by the Penrose--Low twistor map~\cite{Lo1, Lo2} sending 
a point $x\in X$ to its celestial sphere~$\mathfrak{S}_x\subset\mathfrak{N}_X$,
see~\S\ref{Skies}. 

A basic question about the interval topology on $\mathcal{C}$
is whether or not it is Hausdorff. It is similar (to an extent)
to the non-degeneracy question for the Hofer distance~\cite{Ho} and
its descendants~\cite{Che2, Sh, RoZh}. 

An immediate observation is that the Hausdorff axiom is not satisfied if there exists a  
positive loop in~$\mathcal{C}$. If that loop is contractible,
the issue persists even after passing to the universal cover of $\mathcal{C}$. 
For instance, it follows from~\cite{Liu} that the interval topology 
can never be Hausdorff on the universal cover of the Legendrian 
isotopy class of a {\it loose\/} Legendrian submanifold.
Positive loops are known to be the only obstruction to
{\it orderability\/}~\cite{ElPo, ChNe3}, which suggests 
the following problem:

\begin{que}
Suppose that $\mathcal{C}$ is (universally) orderable (see \S\ref{Preoders}).
Is the interval topology Hausdorff on (the universal cover of)~$\mathcal{C}$?
\end{que}

The answer remains unknown in general. Using generating functions
methods~\cite{Vi, ChNe1, CFP}, we check that the 
interval topology is Hausdorff on the class 
of the zero section of the $1$-jet bundle of a closed manifold
and on the class of the fibre of the spherical cotangent bundle of a manifold
covered by an open subset of~$\R^n$. The latter case is relevant
for Lorentz geometry as it leads to a new causal completion 
for some globally hyperbolic spacetimes.

\subsection*{Organisation of the paper}
Section~\ref{DefsProps} introduces the interval topology 
and discusses its general properties. The case of $1$-jet
bundles and (certain) spherical cotangent bundles is dealt with
in Section~\ref{1jetsCoSph}. The last section explores
the relation to the Alexandrov topology on spacetimes.

\subsection*{Conventions}
All manifolds and maps are taken to be $C^\infty$-smooth.
Contactomorphisms of co-oriented contact structures 
are assumed to be co-orientation preserving. 

\section{Interval topology and orderability}
\label{DefsProps}

\subsection{Positive and non-negative isotopies in contact geometry}
\label{PosNneg}
Let $(Y,\ker\alpha)$ be a contact manifold with a co-oriented contact structure.
A Legendrian isotopy $\{L_t\}_{t\in [0,1]}$ in $(Y,\ker\alpha)$
is called {\it non-negative\/} if it has a parametrisation $\ell_t:L_0\to L_t$
such that
\begin{equation}
\label{DefLegNonneg}
\alpha\left(\tfrac{d}{dt}\ell_t(x)\right)\ge 0
\end{equation}
for all $t\in [0,1]$ and $x\in L_0$. If the inequality in~\eqref{DefLegNonneg}
is strict, the isotopy is said to be {\it positive}.

It is clear from the definition that the property to be positive
or non-negative does not depend on the choice of a parametrisation 
of the isotopy and of a contact form defining the (co-oriented) 
contact structure. This property is also obviously preserved by 
(co-orientation preserving) contactomorphisms of $(Y,\ker\alpha)$.

\begin{exm}
\label{PosInJets}
A neighbourhood of a Legendrian submanifold $L\subset Y$ is contactomorphic
to a neighbourhood of the zero section of the 1-jet bundle $\Jet^1(L)$
with its canonical contact form, see Subsec.~\ref{minimax}.
Every Legendrian that is sufficiently $C^1$-close to $L$
corresponds to the graph of the 1-jet of a smooth function on~$L$.
A small Legendrian isotopy of $L$ is non-negative (or positive)
if and only if the corresponding family of functions on $L$ is pointwise
non-decreasing (or increasing).
\end{exm}

Similarly, an isotopy of contactomorphisms
$\{\phi_t\}_{t\in [0,1]}$ is called non-negative if its contact Hamiltonian
\begin{equation}
\label{DefContNonneg}
H(\phi_t(x),t):=\alpha\left(\tfrac{d}{dt}\phi_t(x)\right)\ge 0
\end{equation}
for all $t\in [0,1]$ and~$x\in Y$. If the inequality is strict, the 
isotopy is called positive. This property is invariant with respect
to the left and right actions of the contactomorphism group on itself.
Note also that if $L\subset Y$ is a Legendrian submanifold and
$\{\phi_t\}$ is a non-negative (or positive) contact isotopy,
then $\{\phi_t(L)\}$ is a non-negative (or positive) Legendrian isotopy.

\begin{exm}
\label{Reeb}
The Reeb flow of any contact form is a positive contact isotopy.
(Its contact Hamiltonian with respect to that contact form is
identically equal to one.)
\end{exm}

\begin{rem}
\label{LegGraphs}
For a contactomorphism $\phi$ with $\phi^*\alpha=e^f\alpha$, let
$$
\Gamma_\phi = \{(x,\phi(x),-f(x))\in Y\times Y\times \R\mid x\in Y\}
$$
be its {\it Legendrian graph\/}. It is a Legendrian submanifold
in the contact manifold $\bigl(Y\times Y\times \R, \ker (e^u\pi_2^*\alpha - \pi_1^*\alpha)\bigr)$,
where $\pi_1$ and $\pi_2$ are the projections to the first and second factors
and $u$ is the coordinate on~$\R$.
A contact isotopy $\{\phi_t\}$ is non-negative or positive
if and only if the Legendrian isotopy $\{\Gamma_{\phi_t}\}$
is non-negative or positive in $Y\times Y\times\R$.
\end{rem}

\subsection{Partial (pre-)orders}
\label{Preoders}
Given two Legendrian submanifolds $L_1, L_2$ 
in a co-oriented contact manifold~$(Y,\ker\alpha)$,
we write
$$
L_1\lle L_2
$$
and
$$
L_1\llcurly L_2
$$
if there is a non-negative and, respectively, positive Legendrian isotopy 
from $L_1$ to $L_2$. 

For a pair of contactomorphisms 
$\phi_1,\phi_2\in\mathrm{Cont}(Y,\ker\alpha)$, we write
$$
\phi_1\lle \phi_2
$$
and
$$
\phi_1\llcurly \phi_2
$$
if there is a non-negative and, respectively, positive contact isotopy 
from $\phi_1$ to $\phi_2$.

If $\mathcal{C}$ is a connected component of the space of Legendrians
or of the contactomorphism group equipped with the usual $C^\infty$-topology,
then the relations $\lle$ and $\llcurly$ admit obvious lifts to the 
universal cover $\Pi:\widetilde{\mathcal{C}}\to \mathcal{C}$. 
Namely, $a\lle b$ and, respectively, $a\llcurly b$ for two elements 
$a,b\in \widetilde{\mathcal{C}}$ if there is a path connecting
$a$ to $b$ such that its projection to $\mathcal{C}$ is a
non-negative and, respectively, positive isotopy. (Note that
$\ulle$ was used to denote the lift of $\lle$ in~\cite{ChNe3}.)

The relations $\lle$ and $\llcurly$ are transitive. (This is 
obvious for $\lle$ and requires an easy interpolation argument 
for $\llcurly$, cf.\ e.g.~\cite[Lemma 2.2]{ChNe2} or ~\cite[Proof of Lemma 4.4]{ChNe3}.) 
It is also clear that $\lle$ is reflexive because a constant
isotopy is non-negative. 

\begin{df}
$\mathcal{C}$ is said to be {\it orderable\/} if $\lle$
is a partial order (i.e.\ if $\lle$ is antisymmetric)
and {\it universally orderable\/} if the lift of $\lle$
is a partial order on its universal cover $\widetilde{\mathcal{C}}$.
\end{df}

\begin{rem}
The notion of orderability in contact geometry was introduced 
by Eliashberg--Polterovich~\cite{ElPo} and Bhupal~\cite{Bh}. 
Varied terminology has been in use since then. For instance,
a closed contact manifold is orderable in the sense of~\cite{ElPo}
if the identity component of its contactomorphism group 
is universally orderable.
\end{rem}

\subsection{Interval topology on Legendrians}
Let $\mathcal{L}$ be an isotopy class of closed Legendrian submanifolds 
in a contact manifold $(Y,\ker\alpha)$.
The {\it interval topology\/} on $\mathcal{L}$ is defined by the family 
of {\it intervals\/} 
$$
I_{a,b} := (a,b):= \{z\in\mathcal{L} \mid a\llcurly z\llcurly b\},\quad a,b\in\mathcal{L}.
$$
The interval topology on the universal cover of $\mathcal{L}$ is defined 
in the same way using the lift of~$\llcurly$.

Note that intervals form a base for a topology. Indeed, 
every point $L\in\mathcal{L}$ lies in an interval between 
two $C^\infty$-close Legendrians. (For instance, one can take shifts 
of $L$ by the Reeb flow of a contact form.)
Furthermore, it follows from Example~\ref{PosInJets}
that if $L\in I_1\cap I_2$, then there is an interval 
of this type contained in $I_1\cap I_2$. 

The interval topology is obviously invariant with respect to the 
action of all contactomorphisms (i.e.\ not necessarily co-orientation 
preserving, as reversing $\llcurly$ does not affect intervals). 

It is easy to see from Example~\ref{PosInJets} that the interval topology 
is rougher than the $C^k$-topology for every $k\ge 1$ in the sense that 
its open sets are open in any smooth topology. 
Proposition~\ref{IntC0} implies that restricting the definition 
of the interval topology to a $C^0$-neighbourhood of a Legendrian 
submanifold equips its $C^1$-neighbourhood with a topology that is strictly
rougher than the $C^0$-topology on Legendrians.

\subsection{Hausdorff-ness and orderability} 
The property of the interval topology to be Hausdorff appears to be 
rather similar to orderability. There are two subtle points, however.
First, the interval topology is defined in terms of~$\llcurly$ whereas 
orderability is a property of~$\lle$.  This difficulty has been already
addressed in~\cite{ElPo} and~\cite{ChNe3}, which we are 
going to use now and in~\S\ref{ITopCont}. Secondly, the failure of the 
Hausdorff axiom does not formally imply the existence of non-negative
{\it loops}. We could only find a partial solution to this problem
in Proposition~\ref{disjoint}.

\begin{prop}
\label{Haus2order}
Let $\mathcal{L}$ be an isotopy class of closed Legendrian submanifolds. 
Suppose that the interval topology on the 
{\rm (}universal cover\/{\rm )} of\/ $\mathcal{L}$ is Hausdorff. 
Then $\mathcal{L}$ is {\rm (}universally\/{\rm )} orderable.
\end{prop}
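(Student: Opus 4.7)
The plan is to argue by contradiction. Assume $\mathcal{L}$ (or its universal cover $\wt{\mathcal{L}}$) is not orderable, so that there exist distinct elements $L_1\neq L_2$ with $L_1\lle L_2$ and $L_2\lle L_1$. The strategy is to show that $L_1$ and $L_2$ belong to exactly the same intervals~$(a,b)$. Since these intervals form a base for the interval topology, this forces $L_1$ and $L_2$ to be topologically indistinguishable, contradicting Hausdorff-ness. Non-empty intervals containing any given point exist thanks to the Reeb-shift argument already noted after the definition of the interval topology.

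The one technical input is a ``mixed transitivity'' property of the relations: if $a\llcurly x$ and $x\lle y$ then $a\llcurly y$, and symmetrically, if $x\lle y$ and $y\llcurly b$, then $x\llcurly b$. This is a direct adaptation of the interpolation argument that the paper already invokes for the pure transitivity of $\llcurly$. Concretely, concatenating a strictly positive isotopy (with contact Hamiltonian $\ge\eps>0$) with a non-negative one gives an isotopy whose Hamiltonian is bounded below by $\eps$ on an open sub-interval of times; a small perturbation of the generating Hamiltonian supported in the interior, in the style of~\cite[Lemma 2.2]{ChNe2} or \cite[Proof of Lemma 4.4]{ChNe3}, spreads this strict positivity across all times without moving the endpoints. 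Because the perturbation is a homotopy rel endpoints, the statement lifts to $\wt{\mathcal{L}}$ verbatim.

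Given this, the deduction is formal. If $L_1\in(a,b)$, i.e.\ $a\llcurly L_1\llcurly b$, then $a\llcurly L_1$ combined with $L_1\lle L_2$ yields $a\llcurly L_2$, while $L_2\lle L_1$ combined with $L_1\llcurly b$ yields $L_2\llcurly b$; hence $L_2\in(a,b)$. Swapping the roles of $L_1$ and $L_2$ gives the reverse inclusion. Thus every basic open neighbourhood of $L_1$ contains $L_2$ and conversely, contradicting the Hausdorff assumption. The main---and essentially the only---obstacle is the mixed transitivity step; everything else is pure formalism, and the universal-cover case is literally the same after replacing $\mathcal{L}$ by $\wt{\mathcal{L}}$ throughout.
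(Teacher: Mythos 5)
Your argument is correct, but it is genuinely different from the one in the paper. The paper deduces from non-orderability the existence of a positive \emph{loop} in $\mathcal{L}$ (resp.\ a contractible one in the universal cover) via \cite[Proposition 4.7]{ChNe3} --- a substantive theorem --- and then only needs the pure transitivity of $\llcurly$ to see that two points on such a loop lie in exactly the same intervals. You instead start directly from the failure of antisymmetry of $\lle$, i.e.\ from $L_1\ne L_2$ with $L_1\lle L_2\lle L_1$, and separate them using the mixed transitivity $\llcurly\circ\lle\;\subseteq\;\llcurly$ and $\lle\circ\llcurly\;\subseteq\;\llcurly$. That mixed transitivity is true and is indeed proved by the standard interpolation device: extend the positive piece to an ambient contact isotopy $\{\Psi_s\}$ whose Hamiltonian is positive near the relevant compact set and consider $\Psi_{\sigma(t)}(N_{\rho(t)})$ with $\sigma'>0$; this handles the order ``non-negative then positive'', and the other order follows by reversing the co-orientation. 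Since the deformation corresponds to a homotopy of the reparametrising path in the square $[0,1]^2$ rel its corners, it is a homotopy rel endpoints and the statement does lift to $\wt{\mathcal{L}}$, as you assert. The trade-off is clear: your route avoids the positive-loop theorem of \cite{ChNe3} entirely and shows more directly that non-orderability kills even the T0 property (which the paper records as a by-product), at the cost of having to carry out the mixed interpolation lemma, which is where all the real work in your write-up is concentrated and which you only sketch; the paper's route outsources the hard step to a citation and then is a two-line observation. Both are complete modulo their respective key inputs.
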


\begin{proof}
If $\mathcal{L}$ is not (universally) orderable, then it contains
a (contractible) positive loop by \cite[Proposition 4.7]{ChNe3}. 
Any two elements on (the lift of) such a loop 
cannot have disjoint neighbourhoods in the interval topology.
\end{proof}

The proof shows that if $\mathcal{L}$ is not orderable,
then the interval topology on it is not even T0. 
(There exist distinct points that are not topologically distinguishable, 
i.e.\ every interval neighbourhood of one of them contains the other.) 
This is a general property of the interval topology on Legendrians,
independent of its relation to orderability.

\begin{prop}
\label{T0Haus}
The interval topology on an isotopy class of closed Legendrian submanifolds 
is Hausdorff if and only if it is~\rm{T0}.
\end{prop}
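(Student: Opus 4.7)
I will prove the non-trivial direction, T0 $\Rightarrow$ Hausdorff, by contrapositive; the converse is immediate. Suppose the interval topology fails to be Hausdorff at some distinct $L_1, L_2 \in \mathcal{L}$, so that no pair of interval neighbourhoods of $L_1, L_2$ is disjoint. I will show $L_1$ and $L_2$ are topologically indistinguishable, contradicting T0.

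First, I extract a strong mutual chronology using Reeb shifts. Let $\phi_t$ denote the Reeb flow of some contact form defining $\xi$. For any $\delta, \epsilon > 0$, the basic intervals $(\phi_{-\delta}(L_1), \phi_\delta(L_1))$ and $(\phi_{-\epsilon}(L_2), \phi_\epsilon(L_2))$ are neighbourhoods of $L_1$ and $L_2$. By the non-Hausdorff assumption their intersection contains some $z$; transitivity of $\llcurly$ yields $\phi_{-\delta}(L_1) \llcurly \phi_\epsilon(L_2)$, and pulling back by the contactomorphism $\phi_\delta$ gives $L_1 \llcurly \phi_{\delta+\epsilon}(L_2)$. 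As $\delta, \epsilon > 0$ are arbitrary, $L_1 \llcurly \phi_\sigma(L_2)$ for every $\sigma > 0$, and by symmetry $L_2 \llcurly \phi_\sigma(L_1)$ for every $\sigma > 0$.

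Next I upgrade these strict relations to $L_1 \lle L_2$ and $L_2 \lle L_1$. Given a positive isotopy $\gamma^\sigma$ from $L_1$ to $\phi_\sigma(L_2)$, the Reeb-offset $\tilde\gamma^\sigma(t) := \phi_{-\sigma t}(\gamma^\sigma(t))$ is a Legendrian isotopy from $L_1$ to $L_2$; since the Reeb flow preserves $\alpha$, its contact Hamiltonian equals $H^{\gamma^\sigma}(t,x) - \sigma$. This is non-negative once $\gamma^\sigma$ is chosen or reparametrised with pointwise Hamiltonian $\ge \sigma$, and the required non-negative isotopy from $L_1$ to $L_2$ arises as a $C^\infty$-limit of the $\tilde\gamma^\sigma$ as $\sigma \to 0^+$. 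This passage is the main technical obstacle: one must either bound below the minimum pointwise Hamiltonian of positive isotopies between the given endpoints (via reparametrisation and a lower bound on minimum action), or establish a closure property of the relation $\lle$ in the $C^\infty$-topology on $\mathcal{L} \times \mathcal{L}$.

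Finally, granted $L_1 \lle L_2 \lle L_1$, the same interpolation argument that makes $\llcurly$ transitive (cited in the paper) also shows that $\lle$ composes with $\llcurly$ on either side to yield $\llcurly$. Hence for any interval $(a,b) \ni L_1$: the chains $a \llcurly L_1 \lle L_2$ and $L_2 \lle L_1 \llcurly b$ produce $a \llcurly L_2$ and $L_2 \llcurly b$ respectively, so $L_2 \in (a,b)$. The reverse inclusion follows by swapping $L_1, L_2$, so every interval contains both or neither of them: $L_1$ and $L_2$ are topologically indistinguishable, contradicting T0.
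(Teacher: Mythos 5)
Your first step is sound: from the failure of the Hausdorff axiom you correctly extract $L_1\llcurly\tau_\sigma(L_2)$ and $L_2\llcurly\tau_\sigma(L_1)$ for every $\sigma>0$ (in the paper's notation $\tau$ for the Reeb flow). The gap is exactly where you flag it: passing from ``$L_1\llcurly\tau_\sigma(L_2)$ for all $\sigma>0$'' to $L_1\lle L_2$. Neither of your two suggested repairs works. There is no lower bound on how positive a positive isotopy between two given Legendrians must be: as $\sigma\to 0^+$ the isotopies $\gamma^\sigma$ live in an infinite-dimensional space with no compactness, their lengths and Hamiltonians are completely uncontrolled, and nothing forces the corrected family $\tilde\gamma^\sigma$ to converge in $C^\infty$ (or at all). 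Closedness of $\lle$ in the $C^\infty$-topology is likewise not known; in fact this is precisely the difficulty the authors single out in the introduction to this subsection (``the failure of the Hausdorff axiom does not formally imply the existence of non-negative loops''), and it is why Proposition~\ref{disjoint} needs the extra hypothesis $L_1\cap L_2=\varnothing$. So your argument, if completed, would prove something strictly stronger than the proposition and stronger than what the authors themselves can prove.

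The paper's proof avoids the limit entirely by staying inside the strict relation $\llcurly$. Choose the endpoints $L_1^\pm$ of an interval around $L_1$ to be disjoint from $L_2$ (possible by general position, and with $L_1^\pm$ arbitrarily $C^\infty$-close to $L_1$). The non-Hausdorff hypothesis gives $L_1^-\llcurly\tau_\eps(L_2)$ and $\tau_{-\eps}(L_2)\llcurly L_1^+$; then, for $\eps$ small, a contactomorphism taking $\tau_{\pm\eps}(L_2)$ back to $L_2$ and supported away from $L_1^\mp$ converts these into $L_1^-\llcurly L_2\llcurly L_1^+$. The Reeb shift is thus absorbed by a supported contactomorphism rather than sent to zero, which is why the conclusion obtained is ``$L_2$ lies in every interval neighbourhood of $L_1$'' (enough to violate T0) rather than the unreachable $L_1\lle L_2\lle L_1$. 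If you want to salvage your write-up, replace your second and third steps by this localisation trick; your final step (mixed transitivity of $\llcurly$ and $\lle$) is then not needed.
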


\begin{proof}
The proof is based on the following elementary observation (cf. \cite[Lemma 2.2]{ChNe2}).
Assume that $L, L'\subset Y$ are two closed Legendrian submanifolds
such that $L\cap L'=\varnothing$. Then for any Legendrian $L''$
sufficiently close to $L'$ in the $C^1$-topology, there exists 
a contactomorphism~$\phi\in\mathrm{Cont}_0(Y)$ 
such that $\phi(L'')=L'$ and $\mathop{\mathrm{supp}}(\phi)\cap L =\varnothing$.

Suppose now that $L_1$ and $L_2$ are two Legendrians that do not have disjoint interval neighbourhoods.
Let $L_1^\pm$ be any two Legendrians such that $L_1^-\llcurly L_1\llcurly L_1^+$ and $L_1^\pm\cap L_2=\varnothing$.

For every $\eps>0$, the interval $I:=(L_1^-, L_1^+)$ must intersect the interval 
$(\tau_{-\eps}(L_2),\tau_{\eps}(L_2))$, where $\tau_{t}$, $t\in\R$, is the Reeb flow 
of any contact form. Hence, $L_1^-\llcurly\tau_\eps(L_2)$ and $\tau_{-\eps}(L_2)\llcurly L_1^+$.

If $\eps>0$ is small enough, we can find contactomorphisms $\phi_\pm$ 
such that $\phi_\pm(\tau_{\mp\eps}(L_2))=L_2$ and $\mathop{\mathrm{supp}}(\phi_\pm)\cap L_1^\pm =\varnothing$.
It follows from the invariance of $\llcurly$ that $L_1^-\llcurly L_2\llcurly L_1^+$ and therefore $L_2\in I$.

The Legendrians $L_1^\pm$ disjoint from $L_2$ can be chosen as close 
to $L_1$ in the $C^\infty$-topology as we wish by Example~\ref{PosInJets}
and a general position argument. Thus, $I$ can be made arbitrarily small
in the interval topology and therefore $L_2$ lies in every interval neighbourhood of~$L_1$.
\end{proof}

The non-distinguishable Legendrians obtained from a positive loop
in a non-orderable isotopy class can be chosen disjoint 
by Example~\ref{PosInJets}.
As a partial converse to Proposition~\ref{Haus2order}, we use
an argument very similar to the one in the proof of Proposition~\ref{T0Haus}
to show that disjoint Legendrians in an orderable class are always
separated by intervals. 

\begin{prop}
\label{disjoint}
Let $\mathcal{L}$ be an orderable Legendrian isotopy class. 
If $L_1, L_2\in\mathcal{L}$ are closed Legendrian submanifolds that are disjoint as sets, 
then they have disjoint neighbourhoods for the interval topology on~$\mathcal{L}$.
\end{prop}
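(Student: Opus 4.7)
The plan is to argue by contradiction, following the structure of the proof of Proposition~\ref{T0Haus} and closing with an appeal to orderability. Suppose disjoint $L_1,L_2\in\mathcal L$ fail to have disjoint interval neighbourhoods.

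First, I would repeat the construction in the proof of Proposition~\ref{T0Haus} essentially verbatim. By Example~\ref{PosInJets} and general position, $L_1^\pm$ can be chosen arbitrarily $C^\infty$-close to $L_1$ with $L_1^-\llcurly L_1\llcurly L_1^+$ and $L_1^\pm\cap L_2=\varnothing$. Intersecting $(L_1^-,L_1^+)$ with the Reeb-neighbourhood $(\tau_{-\eps}(L_2),\tau_\eps(L_2))$ of $L_2$ and then conjugating by compactly-supported contactomorphisms $\phi_\pm$ disjoint from $L_1^\pm$ yields $L_1^-\llcurly L_2\llcurly L_1^+$. Exchanging $L_1$ and $L_2$ in the whole argument gives analogously $L_2^-\llcurly L_1\llcurly L_2^+$ for arbitrarily close $L_2^\pm$ disjoint from $L_1$.

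Next, specialising $L_i^\pm=\tau_{\pm\eps}(L_i)$ for small generic $\eps>0$ and applying the Reeb contactomorphism $\tau_{\pm\eps}$ (which preserves $\llcurly$) one obtains
$$
\tau_{-\eps}(L_2)\llcurly L_1\llcurly \tau_\eps(L_2),\qquad \tau_{-\eps}(L_1)\llcurly L_2\llcurly \tau_\eps(L_1)
$$
for every small $\eps>0$. I would then use these to construct non-negative contact isotopies realising both $L_2\lle L_1$ and $L_1\lle L_2$: concatenate the witness positive isotopy (e.g.\ $\tau_{-\eps}(L_2)\to L_1$) with a short backward-Reeb arc from $L_2$ to $\tau_{-\eps}(L_2)$, reparametrise so that the strictly positive Hamiltonian dominates the negative Reeb contribution pointwise, and pass to the limit $\eps\to 0$. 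Orderability of $\mathcal L$ (antisymmetry of $\lle$) then forces $L_1=L_2$, contradicting $L_1\cap L_2=\varnothing$.

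The main obstacle is this last step: extracting genuine $\lle$-relations between $L_1$ and $L_2$ from the family of positive isotopies involving arbitrarily small Reeb perturbations of the endpoints. The key technical point is a uniform lower bound on the contact Hamiltonians of the witness isotopies as $\eps\to 0$; without such uniformity the compensating-shift argument fails. A natural way to secure the bound is to unwind the explicit construction of these isotopies in terms of the contactomorphisms $\phi_\pm$ from Proposition~\ref{T0Haus} and to observe that their supports can be arranged in a fixed neighbourhood of $L_2$ (respectively $L_1$) independently of $\eps$, yielding the required uniformity.
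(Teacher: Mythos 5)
Your overall strategy --- derive $L_1\llcurly L_2$ and $L_2\llcurly L_1$ from the failure of separation and contradict the antisymmetry of $\lle$ --- is exactly the paper's, and your first two steps are sound (the paper reaches $\tau_{-\eps}(L_1)\llcurly\tau_\eps(L_2)$ and $\tau_{-\eps}(L_2)\llcurly\tau_\eps(L_1)$ more directly, simply by noting that the intervals $\bigl(\tau_{-\eps}(L_1),\tau_\eps(L_1)\bigr)$ and $\bigl(\tau_{-\eps}(L_2),\tau_\eps(L_2)\bigr)$ must share a point $L$ and using transitivity of $\llcurly$, without rerunning the whole argument of Proposition~\ref{T0Haus}). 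The genuine gap is your last step. Concatenating a positive isotopy with a short backward-Reeb arc and then ``reparametrising so that the strictly positive Hamiltonian dominates the negative Reeb contribution'' cannot work: concatenation is composition in \emph{time}, so on the subinterval occupied by the backward Reeb arc the contact Hamiltonian is strictly negative, and no time-reparametrisation changes the sign of the Hamiltonian at a given moment of the isotopy. What you would actually need is to \emph{superpose} the two motions, and that does require a pointwise lower bound on the Hamiltonian of the witness isotopy exceeding the $O(\eps)$ Reeb correction --- but no such bound is available: the witness isotopies are handed to you by the hypothesis that the interval neighbourhoods intersect, with no quantitative control whatsoever, and the fact that the supports of $\phi_\pm$ sit in fixed neighbourhoods gives you nothing about the Hamiltonians of those isotopies. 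The limit $\eps\to 0$ is likewise neither needed nor meaningful here.

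The correct way to close the argument is the tool you already used in your first step, applied once more. Since $L_1\cap L_2=\varnothing$ and $\tau_{-\eps}(L_2)$ is $C^1$-close to $L_2$ for small $\eps$, there is a contactomorphism $\phi\in\mathrm{Cont}_0(Y)$ with $\phi(\tau_{-\eps}(L_2))=L_2$ and $\mathop{\mathrm{supp}}(\phi)\cap L_1=\varnothing$. Applying $\phi$ to the \emph{entire} positive isotopy from $\tau_{-\eps}(L_2)$ to $L_1$ produces a positive isotopy from $L_2$ to $\phi(L_1)=L_1$, i.e.\ $L_2\llcurly L_1$; symmetrically one gets $L_1\llcurly L_2$. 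Orderability then forces $L_1=L_2$, contradicting disjointness. A single sufficiently small $\eps$ suffices; no estimates and no limiting procedure are required.
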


Let us emphasise that this falls short of proving
that `orderable' implies `Hausdorff' because of the condition
that $L_1\cap L_2=\varnothing$.

\begin{proof}
Suppose that $L_1$ and $L_2$ do not have disjoint interval neighbourhoods.
In particular, for every $\eps>0$, there exists a Legendrian $L\in\mathcal{L}$ such that
$$
\tau_{-\eps}(L_j)\llcurly L \llcurly \tau_{\eps}(L_j), \quad j=1,2,
$$
where  $\tau_{t}$, $t\in\R$, is the Reeb flow of a contact form.
Hence, there exist positive Legendrian isotopies connecting $\tau_{-\eps}(L_1)$ to $\tau_{\eps}(L_2)$ 
and $\tau_{-\eps}(L_2)$ to $\tau_{\eps}(L_1)$.
Since $L_1\cap L_2=\varnothing$, it follows that for a sufficiently small $\eps>0$, 
we can apply contactomorphisms supported near $L_1$ and $L_2$ to these isotopies 
and obtain positive Legendrian isotopies connecting $L_1$ to $L_2$ and $L_2$ to $L_1$. 
Hence, $L_1\llcurly L_2$ and $L_2\llcurly L_1$,
which is impossible because $\mathcal{L}$ is orderable.
\end{proof}

\begin{rem}
\label{neighbsize}
The same argument shows that if $L_2$ does not intersect a given neighbourhood of $L_1$
in the contact manifold, then it cannot lie in the interval 
$\bigl(\tau_{-\eps}(L_1),\tau_{\eps}(L_1)\bigr)$
for any sufficiently small~$\eps>0$. (Assuming, of course, that the Legendrian 
isotopy class of $L_1$ is orderable.)
\end{rem}

\subsection{Interval topology on contactomorphisms}
\label{ITopCont}
Let $\mathcal{C}$ be a connected component of the contactomorphism group 
of a contact manifold $(Y,\ker\alpha)$.
The {\it interval topology\/} on $\mathcal{C}$ is defined by the family 
of {\it intervals\/} 
$$
I_{a,b} := (a,b):= \{z\in\mathcal{C} \mid a\llcurly z\llcurly b\},\quad a,b\in\mathcal{C}.
$$
The interval topology on the universal cover of $\mathcal{C}$ is defined 
in the same way using the lifted relation.

This topology is invariant with respect to the left and right action
of $\mathrm{Cont}_0(Y,\ker\alpha)$ as well as conjugation by arbitrary 
contactomorphisms (whenever it preserves $\mathcal{C}$).

\begin{prop}
\label{Haus2orderCont}
Let $\mathcal{C}$ be a connected component of the contactomorphism group of a closed
contact manifold. Suppose that the interval topology on the 
{\rm (}universal cover\/{\rm )} of\/ $\mathcal{C}$ is Hausdorff. 
Then $\mathcal{C}$ is {\rm (}universally\/{\rm )} orderable.
\end{prop}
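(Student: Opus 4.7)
The plan is to follow the proof of Proposition~\ref{Haus2order} line for line, substituting contact isotopies for Legendrian isotopies. First I would assume that $\mathcal{C}$ fails to be (universally) orderable and invoke the contactomorphism analogue of \cite[Proposition~4.7]{ChNe3} (equivalently, the Eliashberg--Polterovich loop criterion of~\cite{ElPo}, which is precisely where the closedness of $Y$ enters) to obtain a (contractible, in the universal cover case) positive loop of contactomorphisms $\{\phi_t\}_{t\in[0,1]}$ with $\phi_0=\phi_1\in\mathcal{C}$. Since the loop is positive, it cannot be constant.

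Next I would pick an intermediate parameter $s\in(0,1)$ at which $\phi_s\neq\phi_0$. Restricting $\{\phi_t\}$ to $[0,s]$ and to $[s,1]$ produces positive contact isotopies from $\phi_0$ to $\phi_s$ and from $\phi_s$ back to $\phi_0$, so that
$$
\phi_0\llcurly\phi_s \quad\text{and}\quad \phi_s\llcurly\phi_0
$$
in~$\mathcal{C}$. In the universal cover setting the loop is contractible, so it lifts to a genuine loop in $\widetilde{\mathcal{C}}$ through any preimage of $\phi_0$, and the same two positive isotopies produce two distinct lifts $\widetilde{\phi_0}\neq\widetilde{\phi_s}$ in $\widetilde{\mathcal{C}}$ satisfying the analogous pair of relations.

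Finally I would show that $\phi_0$ and $\phi_s$ (or their lifts) admit no disjoint interval neighbourhoods, by the same one-line transitivity argument used in the Legendrian case. For any basic open interval $(a,b)$ containing $\phi_0$, we have $a\llcurly\phi_0\llcurly b$, and since $\llcurly$ is transitive (see~\S\ref{Preoders}) it follows from $\phi_0\llcurly\phi_s$ and $\phi_s\llcurly\phi_0$ that $a\llcurly\phi_s$ and $\phi_s\llcurly b$; hence $\phi_s\in(a,b)$. A symmetric argument puts $\phi_0$ inside every interval neighbourhood of $\phi_s$. So the interval topology is not even~T0, contradicting Hausdorff-ness.

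The only step carrying real content is the first one, namely the extraction of a (contractible) positive loop from the failure of (universal) orderability for a connected component of the contactomorphism group of a closed contact manifold; this is where one appeals to the Eliashberg--Polterovich theory and where the closedness hypothesis is essential. Everything after that is a mechanical transcription of the Legendrian argument, invoking only transitivity of~$\llcurly$ and the invariance of the interval topology under the left/right action of $\mathrm{Cont}_0(Y,\ker\alpha)$ noted in~\S\ref{ITopCont}.
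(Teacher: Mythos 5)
Your proposal is correct and follows essentially the same route as the paper: deduce from the failure of (universal) orderability a (contractible) positive loop of contactomorphisms via the Eliashberg--Polterovich criterion \cite[Criterion 1.2.C]{ElPo} (this is indeed where closedness is used), and then observe via transitivity of $\llcurly$ that two distinct points on (the lift of) such a loop lie in each other's interval neighbourhoods, contradicting the Hausdorff (indeed even the T0) property. The paper's proof is a two-line version of exactly this argument; your spelled-out transitivity step and the remark that positivity forces the loop to be non-constant are the details it leaves implicit.
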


\begin{proof}
If $\mathcal{C}$ is not (universally) orderable, then it contains
a (contractible) positive loop of contactomorphisms by \cite[Criterion 1.2.C]{ElPo}. 
Any two elements on (the lift of) such a loop 
cannot have disjoint neighbourhoods in the interval topology.
\end{proof}

The Hausdorff property is inherited by contactomorphisms from Legendrians 
in the same way as orderability.

\begin{prop}
\label{leg2cont}
Suppose that a contact manifold $(Y,\ker\alpha)$ contains an isotopy class 
of closed Legendrians on {\rm (}the universal cover of\/{\rm )} which the interval topology is Hausdorff.
Then the interval topology is Hausdorff on {\rm (}the universal cover of\/{\rm )} $\mathrm{Cont}_0(Y,\ker\alpha)$.
\end{prop}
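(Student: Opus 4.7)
The strategy is to transfer the Hausdorff property from $\mathcal{L}$ to $\mathrm{Cont}_0(Y,\ker\alpha)$ via the evaluation map $\mathrm{Ev}_L\colon\phi\mapsto\phi(L)$ at a fixed Legendrian $L\in\mathcal{L}$. Since a positive contact isotopy restricts to a positive Legendrian isotopy of $L$ (see \S\ref{PosNneg}), $\mathrm{Ev}_L$ preserves $\llcurly$ and hence carries intervals into intervals, and after fixing basepoints it lifts to a map $\widetilde{\mathrm{Cont}}_0(Y,\ker\alpha)\to\widetilde{\mathcal{L}}$. The companion ingredient I need is a standard extension: any positive Legendrian isotopy from $L$ to $L'$ is the restriction of an ambient positive contact isotopy from $\mathrm{id}$ to some $\psi$ with $\psi(L)=L'$, produced by combining the Legendrian isotopy extension theorem with enlarging the positive contact Hamiltonian (defined on a neighborhood of the moving Legendrian) to a strictly positive Hamiltonian on all of $Y$ via cutoff and a positive background term. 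Consequently, for every interval $(L^-,L^+)\ni\phi(L)$ in $\mathcal{L}$ there is an interval $(\psi^-,\psi^+)\ni\phi$ in $\mathrm{Cont}_0(Y,\ker\alpha)$ with $\psi^\pm(L)=L^\pm$ that $\mathrm{Ev}_L$ maps into $(L^-,L^+)$, and the same holds on the universal cover.

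Given distinct $\widetilde{\phi}_1,\widetilde{\phi}_2\in\widetilde{\mathrm{Cont}}_0(Y,\ker\alpha)$ (the $\mathrm{Cont}_0$ case is parallel with tildes erased), I would first pick $L\in\mathcal{L}$ for which $\widetilde{\phi}_1(\widetilde{L})\neq\widetilde{\phi}_2(\widetilde{L})$ in $\widetilde{\mathcal{L}}$. If $\phi_1$ and $\phi_2$ disagree downstairs, such $L$ exists by a general position argument: $\mathrm{Cont}_0$-invariance of $\mathcal{L}$ produces a Legendrian of $\mathcal{L}$ through every point of $Y$, and two such Legendrians through a point $y\in Y$ with transverse Lagrangian tangent planes in $\xi_y$ distinguish $\phi_1(y)$ from $\phi_2(y)$ whenever $\phi_1(y)\neq\phi_2(y)$. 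The Hausdorff hypothesis on $\widetilde{\mathcal{L}}$ then yields disjoint intervals $(\widetilde{L}_j^-,\widetilde{L}_j^+)\ni\widetilde{\phi}_j(\widetilde{L})$, and the extension construction supplies intervals $(\widetilde{\psi}_j^-,\widetilde{\psi}_j^+)\ni\widetilde{\phi}_j$ with $\widetilde{\psi}_j^\pm(\widetilde{L})=\widetilde{L}_j^\pm$. Any common element $\widetilde{\chi}$ of the two lifted intervals would satisfy $\widetilde{\chi}(\widetilde{L})\in(\widetilde{L}_1^-,\widetilde{L}_1^+)\cap(\widetilde{L}_2^-,\widetilde{L}_2^+)=\varnothing$, a contradiction; so they are disjoint and separate $\widetilde{\phi}_1$ from $\widetilde{\phi}_2$.

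The main obstacle is the universal-cover subcase $\phi_1=\phi_2$, in which $\widetilde{\phi}_1$ and $\widetilde{\phi}_2$ differ only by a nontrivial element $g\in\pi_1(\mathrm{Cont}_0(Y,\ker\alpha))$. To carry the scheme above through, one must exhibit $L\in\mathcal{L}$ for which the loop of Legendrians obtained by evaluating $g$ on $L$ represents a nontrivial class in $\pi_1(\mathcal{L},L)$, so that once again $\widetilde{\phi}_1(\widetilde{L})\neq\widetilde{\phi}_2(\widetilde{L})$ in $\widetilde{\mathcal{L}}$ and the previous paragraph applies. This is the tilde-analog of the transitivity step used downstairs, and it mirrors precisely the way universal orderability is inherited from a Legendrian class by the ambient contactomorphism group (cf.\ the proof of Proposition~\ref{Haus2orderCont} and \cite[Criterion 1.2.C]{ElPo}); once this detail is settled, the outline above delivers the required disjoint separating intervals in $\widetilde{\mathrm{Cont}}_0(Y,\ker\alpha)$.
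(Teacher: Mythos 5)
Your argument for $\mathrm{Cont}_0$ itself is correct and is essentially the paper's argument in a different dress: both proofs come down to showing that the evaluation map $\phi\mapsto\phi(L)$ is continuous for the interval topologies and that it separates distinct contactomorphisms via a general position choice of~$L$. The difference is purely technical. You obtain continuity by extending a positive Legendrian isotopy to an ambient positive contact isotopy (a standard cut-off construction, cf.~\cite{ElPo, ChNe3}), which produces an interval $(\psi^-,\psi^+)\ni\phi$ with prescribed images $\psi^\pm(L)=L^\pm$. The paper instead uses the Reeb flow: every interval around $\phi(L)$ contains one of the form $\bigl(\tau_{-\eps}(\phi(L)),\tau_\eps(\phi(L))\bigr)$ by transitivity of $\llcurly$, and this is the image of the contactomorphism interval $(\tau_{-\eps}\circ\phi,\tau_\eps\circ\phi)$. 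The paper's route avoids the isotopy extension theorem altogether; yours is equally valid and perhaps more transparent about why the evaluation map is open onto its image.

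The universal-cover case is where your proposal has a genuine gap, and the fix you sketch will not work in general. If $\widetilde\phi_1$ and $\widetilde\phi_2$ differ by a deck transformation $g\in\pi_1(\mathrm{Cont}_0(Y,\ker\alpha),\id)$, your strategy requires some $L\in\mathcal{L}$ for which the evaluated loop $t\mapsto g_t(L)$ is nontrivial in $\pi_1(\mathcal{L},L)$, i.e.\ it requires the evaluation homomorphism $\mathrm{ev}_*:\pi_1(\mathrm{Cont}_0)\to\pi_1(\mathcal{L})$ to be injective. There is no reason for this: $\mathrm{ev}_*$ can have a nontrivial kernel, and for $g\in\ker\mathrm{ev}_*\setminus\{1\}$ one has $\widetilde\phi_1(\widetilde L)=\widetilde\phi_2(\widetilde L)$ in $\widetilde{\mathcal{L}}$ for every $L$, so no separation can be pulled back from $\widetilde{\mathcal{L}}$. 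This is also not the same phenomenon as the inheritance of universal orderability that you invoke: there the witness of failure is a single contractible \emph{positive} loop of contactomorphisms, and such a loop always evaluates to a contractible positive (in particular non-constant) loop of Legendrians, so the obstruction pushes forward automatically. Here the witness is a \emph{pair of distinct points}, and distinctness does not push forward. What your argument (and the paper's) actually proves upstairs is only this: a non-separable pair in $\widetilde{\mathrm{Cont}_0}$ must project to a single contactomorphism and differ by an element of $\ker\mathrm{ev}_*$. Ruling out such pairs needs an additional idea (or the additional hypothesis $\ker\mathrm{ev}_*=1$); note that the paper's own ``the same argument works \emph{mutatis mutandis} for the universal covers'' is silent on exactly this point, so you have located a real subtlety --- but your proposed resolution does not close it.
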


\begin{proof}
Let us show that if $\psi\in\mathrm{Cont}_0(Y,\ker\alpha)$ cannot be separated from 
the identity $\id\in\mathrm{Cont}_0(Y,\ker\alpha)$, 
then $\psi(L)$ cannot be separated from~$L$ for any closed Legendrian~$L\subset Y$.
Indeed, for any intervals $I_1, I_2\subset \Leg(L)$ such that $L\in I_1$ and 
$\psi(L)\in I_2$, there exists a small $\eps>0$ such that $\tau_{\pm\eps}(L)\in I_1$
and $\tau_{\pm\eps}(\psi(L))\in I_2$, where $\tau_{t}$, $t\in\R$, is the Reeb flow of the contact form~$\alpha$.
Hence, $I_1\supset (\tau_{-\eps}(L),\tau_{\eps}(L))$ and $I_2\supset (\tau_{-\eps}(\psi(L)),\tau_{\eps}(\psi(L)))$ 
by the transitivity of $\llcurly$. 

Now take $\phi\in \mathrm{Cont}_0(Y,\ker\alpha)$ from the intersection 
of the intervals $(\tau_{-\eps},\tau_{\eps})\ni\id$ and $(\tau_{-\eps}\circ\psi,\tau_{\eps}\circ\psi)\ni\psi$.
Then $\phi(L)\in I_1\cap I_2$. The same argument works {\it mutatis mutandis\/}
for the universal covers.
\end{proof}

\begin{rem}
\label{GraphIndTop}
There is an {\it a priori\/} different way to introduce a topology of
this type on contactomorphisms. Namely, if $\mathcal{C}$ is a connected component 
of the contactomorphism group, the map $\phi\mapsto\Gamma_\phi$ from 
Remark~\ref{LegGraphs} embeds it into a Legendrian isotopy class 
in $Y\times Y\times\R$. The topology induced from the interval topology 
on Legendrians by this embedding is formally rougher than the interval 
topology on~$\mathcal{C}$. 
\end{rem}

\section{$1$-jet bundles and spherical cotangent bundles}
\label{1jetsCoSph}

\subsection{Minimax invariants from generating functions}
\label{minimax}
Let $\Jet^1(L)$ denote the $1$-jet bundle of a closed connected
manifold~$L$ equipped with the standard contact form $du-\lcan$,
where $u$ is the fibre coordinate in $\Jet^0(L)$ and $\lcan$
is the Liouville form on $T^*L$.

Let $\Lambda\subset\Jet^1(L)$ be a Legendrian submanifold.
A function 
$$
S=S(q,\xi):L\times\R^N\to\R
$$ 
is  a {\it generating function\/} for $\Lambda$ if zero is a regular value 
of the partial differential $d_\xi S$ and the map
\begin{equation}
\label{gendef}
\{d_\xi S(q,\xi)=0\}\ni (q,\xi) \longmapsto (q,d_q S(q,\xi),S(q,\xi))\in \Jet^1(L)
\end{equation}
is a diffeomorphism onto~$\Lambda$. A generating function is said to be
{\it quadratic at infinity\/} if $S(q,\xi)=Q(\xi)+\sigma(q,\xi)$, where
$\sigma$ has compact support and $Q(\cdot)$ is a non-degenerate quadratic 
form in the variable~$\xi$.

For a quadratic at infinity function $S:L\times\R^N\to\R$, let
$$
S^c:=\{(q,\xi)\in L\times\R^N\mid S(q,\xi)\le c\}
$$
be its sublevel sets and denote by $S^{-\infty}$ the set $S^c$ for a sufficiently negative~$c\ll 0$.
Following Viterbo~\cite[\S2]{Vi}, one can use homology relative to $S^{-\infty}$ to 
select special critical values~$c_\pm(S)$ of~$S$.

Let $\R^N=V_+\times V_-$ be a decomposition into linear subspaces
such that $Q$ is positive definite on $V_+$ and negative definite on $V_-$.
Consider the relative $\Z/2$-homology classes 
$$
[L\times V_-]\in\HH_{\nu+\dim L}(L\times\R^N,S^{-\infty};\Z/2)
$$
and
$$
[\{q_0\}\times V_-]\in\HH_\nu(L\times\R^N,S^{-\infty};\Z/2),
$$ 
where $\nu=\dim V_-$ and $q_0$ is any point in~$L$. 
Define
$$
c_+(S):=\inf\bigl\{c\in\R\mid [L\times V_-]\in \imath_*\HH_{\nu+\dim L}(S^c,S^{-\infty};\Z/2)\bigr\}
$$
and
$$
c_-(S):=\inf\bigl\{c\in\R\mid [\{q_0\}\times V_-]\in \imath_*\HH_\nu(S^c,S^{-\infty};\Z/2)\bigr\},
$$
where the map $\imath_*:\HH_*(S^c,S^{-\infty};\Z/2)\to \HH_*(L\times\R^N,S^{-\infty};\Z/2)$
of relative homology groups with $\Z/2$ coefficients is induced by the inclusion~$\imath:S^c\to L\times\R^N$.

It follows from the definitions that $c_\pm(S)$ may also be defined by minimax.
Namely,
$$
c_+(S) = \min_{V_+} \max_{L\times\{v_+\}\times V_-} S(q,v_+,v_-)
$$
and 
$$
c_-(S) = \min_{L\times V_+}\; \max_{\{q\}\times\{v_+\}\times V_-} S(q,v_+,v_-).
$$

\begin{exm}
\label{graphs}
Let $\Lambda^f:=\{(q,df(q),f(q))\mid q\in L\}\subset\Jet^1(L)$ be the graph
of the $1$-jet of a smooth function $f:L\to\R$. Then
$$
c_-(S)=\min_L f\quad\text{ and }\quad c_+(S)=\max_L f
$$
for any quadratic at infinity generating function $S:L\times\R^N\to\R$ of the Legendrian
submanifold~$\Lambda^f\subset\Jet^1(L)$. In particular, $c_+(S)=c_-(S)=c$ 
if $S$ generates the graph of the 1-jet of the constant function $f\equiv c$. 
\end{exm}

\begin{lem}[{cf.~\cite[Corollary 2.3]{Vi}}]
\label{cpmconst}
Let $S$ be a quadratic at infinity generating function 
of a closed connected Legendrian submanifold $\Lambda$.
If~$c_+(S)=c_-(S)=c$, then $\Lambda=\Lambda^c=\{(q,0,c)\mid q\in L\}$
is the graph of the $1$-jet of the constant function~$f\equiv c$.
\end{lem}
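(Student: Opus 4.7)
The plan is to combine the min-max formulas for $c_\pm(S)$ to produce, for every $q\in L$, a critical point of $S$ at value $c$ whose image under the parametrisation map~\eqref{gendef} is the point $(q, 0, c) \in \Jet^1(L)$. Both $\Lambda$ and $\Lambda^c$ are closed smooth connected submanifolds of $\Jet^1(L)$ of the same dimension $\dim L$, so once we have the inclusion $\Lambda^c \subset \Lambda$ produced from such critical points, equality will follow by invariance of domain.

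First I would introduce $g(q, v_+) := \max_{v_- \in V_-} S(q, v_+, v_-)$ and $h(v_+) := \max_q g(q, v_+)$; both maxima are attained because $S = Q + \sigma$ with $Q$ negative definite on $V_-$ and $\sigma$ of compact support. The minimax formulas stated in the excerpt read $c_-(S) = \min_{(q, v_+)} g$ and $c_+(S) = \min_{v_+} h$. Pick $v_+^*$ realising $h(v_+^*) = c$. Then $g(q, v_+^*) \le c$ for every $q \in L$, while the global lower bound $g \ge c_-(S) = c$ gives the reverse inequality. Hence $g(q, v_+^*) = c$ for every $q \in L$. Equivalently, the restriction $\tilde S := S(\,\cdot\,, v_+^*, \,\cdot\,)$ to the slice $L \times \{v_+^*\} \times V_-$ attains its global maximum $c$ on a compact subset whose projection to $L$ is surjective.

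Next, for each $q \in L$ pick $v_-(q) \in V_-$ with $\tilde S(q, v_-(q)) = c$. Since $(q, v_-(q))$ is an interior global maximum of $\tilde S$ on $L \times V_-$, the vanishing of its gradient supplies $d_q S(q, v_+^*, v_-(q)) = 0$ and $d_{v_-} S(q, v_+^*, v_-(q)) = 0$. For the remaining condition $d_{v_+} S = 0$, observe that for each fixed $q$ the function $v_+ \mapsto g(q, v_+)$ has its minimum $c$ at $v_+^*$ (because $g \ge c$ while $g(q, v_+^*) = c$); applying the envelope theorem to $g$ at a nondegenerate fibre maximiser yields $d_{v_+} g(q, v_+^*) = d_{v_+} S(q, v_+^*, v_-(q)) = 0$. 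The three identities together show that $(q, v_+^*, v_-(q))$ is a critical point of $S$ at value $c$; lying in the fibre critical locus $\{d_\xi S = 0\}$, it is sent by the parametrisation~\eqref{gendef} to $(q, d_q S, S) = (q, 0, c) \in \Lambda$. Letting $q$ range over $L$ yields $\Lambda^c \subset \Lambda$, hence $\Lambda = \Lambda^c$ by the first paragraph.

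The hard part is the envelope theorem step, which literally requires $v_-(q)$ to be the unique fibre maximiser with negative-definite Hessian of $\tilde S(q, \cdot)$ at $v_-(q)$. At exceptional $q$ where this fails one can substitute the Clarke/Danskin generalised gradient (the minimisation of $g(q, \cdot)$ at $v_+^*$ forces $0$ into the convex hull of $\{d_{v_+} S(q, v_+^*, v_-) : v_- \in M_q\}$, which together with the tangential information $d_q S = 0 = d_{v_-} S$ from step~(a) still locates a critical point of $S$ over $q$), or reduce to the nondegenerate case by a $C^\infty$-small compactly supported perturbation of $\sigma$ and a continuity argument. This is precisely the content of Viterbo's Corollary~2.3 in~\cite{Vi}, which may be invoked directly.
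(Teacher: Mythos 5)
Your argument is essentially the paper's: the same squeeze $c=c_+(S)\ge \min_{V_+}\max_{\{q\}\times\{v_+\}\times V_-}S\ge c_-(S)=c$ is used to produce, over every $q\in L$, a critical point of $S$ at level $c$, whence $\Lambda\supseteq\Lambda^c$ via~\eqref{gendef} and equality by closedness, connectedness and equality of dimensions; the paper compresses the entire criticality step into the single assertion that a point where the fibrewise value $c$ is attained is a critical point of the full $S$, so your derivation of $d_qS=0=d_{v_-}S$ from the global maximum of $S$ on the slice $L\times\{v_+^*\}\times V_-$ is, if anything, more explicit than what is written there. One caveat on your ``hard part'': the Clarke/Danskin observation does not by itself finish the job (knowing that $0$ lies in the convex hull of $\{d_{v_+}S(q,v_+^*,v_-):v_-\in M_q\}$ is strictly weaker than exhibiting a single maximiser at which $d_{v_+}S$ vanishes), so the reliable way to close the $d_{v_+}S=0$ step is the Lusternik--Schnirelmann/cup-length argument behind \cite[Corollary~2.3]{Vi} or the direct citation you propose --- which is precisely what the ``cf.'' in the statement of the lemma is signalling.
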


\begin{proof}
For every $q\in L$, it follows from the assumption and the minimax
characterisation of $c_\pm$ that
$$
c=c_+(S)\ge \min_{V_+}\; \max_{\{q\}\times\{v_+\}\times V_-} S(q,v_+,v_-) \ge c_-(S)=c.
$$
Thus, $c$ is a critical value of the restriction of $S$ to each
fibre $\{q\}\times\R^N$. A point at which it is attained is
a critical point of~$S$. Hence, $\Lambda\supseteq\Lambda^c$ 
by formula~\eqref{gendef}. Since $\Lambda$ and $\Lambda^c$ are 
closed connected submanifolds of the same dimension, this implies 
that~$\Lambda=\Lambda^c$.
\end{proof}

By Chekanov's theorem~\cite{Che},
for any Legendrian isotopy  $\{\Lambda_t\}_{t\in [0,1]}$ of the zero 
section in~$\Jet^1(L)$, there exists a smooth family of quadratic at 
infinity generating functions $S_t:L\times\R^N\to\R$ for $\Lambda_t$.
Furthermore, this family is unique up to stabilisations and fibrewise
diffeomorphisms by the Viterbo--Th\'eret theorem~\cite{Vi, Th1, Th2}.
Therefore, for a Legendrian submanifold $\Lambda$ in the Legendrian
isotopy class of the zero section, one can define
$$
c_\pm(\Lambda):=c_\pm(S)
$$
for any quadratic at infinity generating function of $\Lambda$ 
obtained by Chekanov's theorem.

\subsection{Interval topology on $\Leg(\Lambda^0)$}
\label{ZeroSect}
On the $1$-jet bundle $\Jet^1(L)$ of a closed connected manifold~$L$,
let
$$
\tau_r(q,p,u):=(q,p,u+r)
$$
be the shift by $r\in\R$ in the $u$-direction, 
i.e.\ the time-$r$ map of the Reeb flow for the standard contact form.
Every Legendrian submanifold $\Lambda\subset\Jet^1(L)$ is obviously 
contained in the interval between $\tau_{-r}(\Lambda)$ and 
$\tau_{r}(\Lambda)$ for any $r>0$. Another trivial observation
is that if $S$ is a generating function for $\Lambda$, then
$S+r$ is a generating function for $\tau_r(\Lambda)$. In particular,
we have $c_\pm(\tau_r(\Lambda))=c_\pm(\Lambda)+r$ for any 
$\Lambda\in\Leg(\Lambda^0)$.

\begin{thm}
\label{0sect}
The interval topology is Hausdorff on the Legendrian isotopy class
of the zero section in~$\Jet^1(L)$.
\end{thm}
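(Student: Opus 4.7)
The plan is to invoke Proposition~\ref{T0Haus} to reduce Hausdorff-ness to the T0 axiom: for distinct $\Lambda_1,\Lambda_2\in\Leg(\Lambda^0)$, I must produce an interval containing exactly one of them. The main tool is the minimax pair $c_\pm$ developed in \S\ref{minimax}.

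The key preliminary is strict monotonicity: $\Lambda\llcurly\Lambda'$ implies $c_+(\Lambda)<c_+(\Lambda')$ and $c_-(\Lambda)<c_-(\Lambda')$. This follows from Chekanov's lift of a positive isotopy to a smooth family $\{S_t\}$ of quadratic-at-infinity generating functions with $\dot S_t>0$ at every critical point; a standard envelope argument then forces $c_\pm(S_t)$ to be strictly increasing. Combined with $c_\pm(\tau_r\Lambda)=c_\pm(\Lambda)+r$ and the fact that $\Lambda\in(\tau_{-\eps}\Lambda,\tau_{\eps}\Lambda)$, this gives the crucial localisation that every Legendrian in the interval $(\tau_{-\eps}\Lambda_0,\tau_{\eps}\Lambda_0)$ satisfies $|c_\pm(\cdot)-c_\pm(\Lambda_0)|<\eps$. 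Hence if $c_+(\Lambda_1)\ne c_+(\Lambda_2)$ or $c_-(\Lambda_1)\ne c_-(\Lambda_2)$, choosing $\eps$ smaller than half the gap in the differing coordinate yields two disjoint Reeb-shifted intervals around $\Lambda_1$ and $\Lambda_2$, settling T0 in this generic case.

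The remaining case $c_\pm(\Lambda_1)=c_\pm(\Lambda_2)=(a,b)$ I would reduce to the previous one by a contactomorphism. Since the constant-graph Legendrian $\Lambda^a=\tau_a(\Lambda^0)$ also lies in $\Leg(\Lambda^0)$, the Legendrian isotopy extension theorem produces $\phi\in\mathrm{Cont}_0(\Jet^1(L))$ with $\phi(\Lambda_1)=\Lambda^a$. Because $\llcurly$ is contact invariant, $\phi$ is a homeomorphism of $\Leg(\Lambda^0)$ for the interval topology, so it suffices to separate $\Lambda^a$ from $\phi(\Lambda_2)$. Since $\phi$ is bijective and $\Lambda_1\ne\Lambda_2$, we have $\phi(\Lambda_2)\ne\Lambda^a$; by Lemma~\ref{cpmconst} this rules out $c_\pm(\phi(\Lambda_2))=(a,a)=c_\pm(\Lambda^a)$, placing the pair $\{\Lambda^a,\phi(\Lambda_2)\}$ back into the previous case. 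Pulling the resulting separating intervals back by $\phi^{-1}$ completes the argument.

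I expect strict monotonicity of $c_\pm$ along positive isotopies to be the one point requiring genuine verification; everything else is a contactomorphism-invariance argument combined with an appeal to Lemma~\ref{cpmconst} at the symmetric Legendrian $\Lambda^a$, so I do not anticipate serious obstacles beyond the standard generating-function calculation.
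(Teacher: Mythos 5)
Your proof is correct and rests on the same machinery as the paper's: the minimax invariants $c_\pm$, their monotonicity along non-negative isotopies, the normalisation $c_\pm(\tau_r\Lambda)=c_\pm(\Lambda)+r$, a global contactomorphism moving $\Lambda_1$ to a constant graph, and Lemma~\ref{cpmconst} to identify the unseparated Legendrian. The paper's version is slightly more streamlined --- it normalises $\Lambda_1$ to the zero section at the outset and derives $c_\pm(\Lambda_2)=0$ directly from the failure of separation, so neither your case split on whether $c_\pm(\Lambda_1)$ and $c_\pm(\Lambda_2)$ agree, nor the appeal to Proposition~\ref{T0Haus}, nor strict (as opposed to the non-strict, already-cited) monotonicity of $c_\pm$ is actually needed.
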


\begin{proof}
Suppose that $\Lambda_1,\Lambda_2\in \Leg(\Lambda^0)$ do not have disjoint open neighbourhoods
for the interval topology. Applying a global contact isotopy, we may assume that $\Lambda_1$ 
is the zero section itself. For every $\eps>0$, there must then exist a Legendrian 
$\Lambda\in \Leg(\Lambda^0)$ such that
$$
\tau_{-\eps}(\Lambda^0)\llcurly \Lambda\llcurly \tau_{\eps}(\Lambda^0)
$$
and
$$
\tau_{-\eps}(\Lambda_2)\llcurly \Lambda \llcurly \tau_{\eps}(\Lambda_2).
$$
By~\cite[Lemma~5.2]{ChNe1}, the minimax invariants $c_\pm$ are non-decreasing
along a non-negative Legendrian isotopy. Hence,
$$
-\eps \le c_\pm(\Lambda)\le \eps
$$
and
$$
c_\pm(\Lambda_2)-\eps \le c_\pm(\Lambda) \le c_\pm(\Lambda_2)+\eps.
$$
It follows that
$$
-2\eps \le c_\pm(\Lambda_2) \le 2\eps
$$
for all~$\eps>0$. Thus,
$$
c_\pm(\Lambda_2) = 0
$$
and $\Lambda_2$ coincides with the zero section by Lemma~\ref{cpmconst}.
\end{proof}

\begin{cor}
\label{ContHausJet}
The interval topology is Hausdorff on the identity component 
$\mathrm{Cont}_0\bigl(\Jet^1(L),\ker(du-\lcan)\bigr)$ for every closed manifold~$L$.
\end{cor}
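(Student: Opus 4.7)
The proof is a direct combination of the two results immediately preceding the corollary. The plan is to verify that the Legendrian isotopy class $\Leg(\Lambda^0)$ of the zero section in $\Jet^1(L)$ satisfies the hypothesis of Proposition~\ref{leg2cont} and then invoke that proposition.

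First, I would note that since $L$ is a closed manifold, the zero section $\Lambda^0 \subset \Jet^1(L)$ is itself a closed Legendrian submanifold, so $\Leg(\Lambda^0)$ is genuinely an isotopy class of closed Legendrians as required by Proposition~\ref{leg2cont}. Next, I would apply Theorem~\ref{0sect}, which asserts precisely that the interval topology on $\Leg(\Lambda^0)$ is Hausdorff. With these two observations in hand, Proposition~\ref{leg2cont} applied to $(Y,\ker\alpha) = \bigl(\Jet^1(L),\ker(du-\lcan)\bigr)$ and to the isotopy class $\Leg(\Lambda^0)$ immediately yields the Hausdorff property for the interval topology on $\mathrm{Cont}_0\bigl(\Jet^1(L),\ker(du-\lcan)\bigr)$.

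There is no genuine obstacle here; the corollary is a packaged consequence of Theorem~\ref{0sect} and Proposition~\ref{leg2cont}. The only point that needs a brief mention is that the same argument carries over to the universal covers: Theorem~\ref{0sect} is stated for $\Leg(\Lambda^0)$, but its proof (using the fact that $c_\pm$ are non-decreasing along non-negative isotopies) lifts verbatim to $\widetilde{\Leg(\Lambda^0)}$, and Proposition~\ref{leg2cont} explicitly includes the universal cover version. Thus the conclusion also holds for the universal cover of $\mathrm{Cont}_0$, although the corollary as stated only requires the base case.
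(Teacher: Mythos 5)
Your proof is correct and is exactly the paper's argument: the corollary is deduced by applying Proposition~\ref{leg2cont} to the Legendrian isotopy class of the zero section, whose interval topology is Hausdorff by Theorem~\ref{0sect}. The added remark about universal covers is harmless but not needed for the statement as given.
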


\begin{proof}
Follows from Theorem~\ref{0sect} and Proposition~\ref{leg2cont}.
\end{proof}

\begin{exm}
\label{JetQuot}
{\bf (i)} Let $Y\cong T^*L\times S^1$ be the quotient of $\Jet^1(L)$ 
by the $\Z$-action generated by~$\tau_1$ with the 
contact form induced by $du-\lcan$. 
The Reeb flow of this form on $Y$ is periodic.
Hence, the interval topology is not Hausdorff on $\mathrm{Cont}_0(Y)$.
However, it is not hard to deduce from Theorem~\ref{0sect} 
and the covering homotopy theorem that the interval topology 
is Hausdorff on the universal cover of the Legendrian isotopy class 
of the projection of the zero section to $Y$ and therefore on 
$\widetilde{\mathrm{Cont}_0}(Y)$.

\smallskip
\noindent
{\bf (ii)} If one is willing to consider one-dimensional contact manifolds,
it is possible to take $L=\{\mathrm{pt}\}$ in $\mathrm{(i)}$. Then $\Jet^1(L)=\R$
and $Y=S^1$. A~co-oriented contact structure in this dimension is just an orientation.
A~connected Legendrian submanifold is a point and its Legendrian
isotopy class is the connected component of the ambient manifold.
The relation $\llcurly$ defines the usual order on $\R$ and the {\it cyclic\/} order on~$S^1$.  
Hence, the interval topology on $S^1$ is clearly non-Hausdorff but it 
becomes Hausdorff on the universal cover. 
\end{exm}

Generating functions methods developed in~\cite{Vi} and adapted to the
contact case in~\cite{ChNe1} may also be used to show that the interval
topology extends the topology of uniform convergence on `potentials',
i.e.\ on smooth functions corresponding to Legendrian graphs in~$\Jet^1(L)$.

\begin{prop}
\label{IntC0}
The interval topology on $\Leg(\Lambda^0)$ induces the topology
of uniform convergence on the space of smooth functions on~$L$
via the embedding 
$$
C^\infty(L)\ni f \longmapsto \Lambda^f \in \Leg(\Lambda^0).
$$
\end{prop}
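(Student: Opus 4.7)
The plan is to reduce to the case $f\equiv 0$ via a strict contactomorphism of $\Jet^1(L)$ and then to establish both inclusions of topologies: monotonicity of the minimax invariants $c_\pm$ handles one direction, and explicit positive graph-isotopies handle the other.

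For the reduction, consider $\phi_f(q,p,u):=(q,p-df(q),u-f(q))$. A direct computation gives $\phi_f^*(du-\lcan)=du-\lcan$ and $\phi_f(\Lambda^g)=\Lambda^{g-f}$, so $\phi_f$ is a strict (co-orientation-preserving) contactomorphism of $\Jet^1(L)$ intertwining the uniform homeomorphism $g\mapsto g-f$ of $C^\infty(L)$ with the action on Legendrian graphs. Since the interval topology is contactomorphism invariant, it suffices to compare the two topologies on $C^\infty(L)$ at $f\equiv 0$.

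To show that the pull-back of the interval topology is at least as fine as the uniform topology, I would fix $\epsilon>0$ and consider the interval $I_\epsilon:=(\Lambda^{-\epsilon/2},\Lambda^{\epsilon/2})$; this is an open neighborhood of $\Lambda^0$ since the linear interpolations of the constants $\pm\epsilon/2$ are positive isotopies of Legendrian graphs (their contact Hamiltonians equal the positive constant $\epsilon/2$). Any $\Lambda^g\in I_\epsilon$ then satisfies $\Lambda^{-\epsilon/2}\llcurly\Lambda^g\llcurly\Lambda^{\epsilon/2}$, and monotonicity of $c_\pm$ under non-negative isotopies together with Example~\ref{graphs} forces $-\epsilon/2\le\min g\le\max g\le\epsilon/2$, so $\|g\|_\infty\le\epsilon/2$.

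For the converse inclusion, fix an interval $I=(\Lambda^-,\Lambda^+)\ni\Lambda^0$. It suffices to produce $\epsilon>0$ with $\Lambda^-\llcurly\Lambda^{-\epsilon}$ and $\Lambda^\epsilon\llcurly\Lambda^+$: any $g$ with $\|g\|_\infty<\epsilon$ then satisfies $-\epsilon<g<\epsilon$ pointwise, linear interpolation of functions produces positive graph-isotopies $\Lambda^{-\epsilon}\llcurly\Lambda^g\llcurly\Lambda^\epsilon$, and transitivity places $\Lambda^g$ in $I$. To manufacture $\epsilon$ on the $\Lambda^-$ side (the $\Lambda^+$ side is symmetric), take a positive isotopy $\{L_t\}_{t\in[0,1]}$ from $\Lambda^-$ to $\Lambda^0$ with contact Hamiltonian $H_t$; closedness of $L$ gives a uniform lower bound $H_t|_{L_t}\ge h_0>0$. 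For small $\delta>0$, the Legendrian $L_{1-\delta}$ is $C^\infty$-close to $\Lambda^0$ and hence is the graph $\Lambda^{g_\delta}$ of a function with
\[
g_\delta(q)=-\delta\,H_1(q,0,0)+O(\delta^2)\le -\delta h_0/2.
\]
Any $\epsilon\in(0,\delta h_0/2)$ then satisfies $g_\delta<-\epsilon$ pointwise, so the linear interpolation $s\mapsto\Lambda^{g_\delta+s(-\epsilon-g_\delta)}$ is a positive isotopy (its contact Hamiltonian equals $-\epsilon-g_\delta>0$). Concatenating with the truncated isotopy $\{L_t\}_{t\in[0,1-\delta]}$ via the standard smoothing argument used for the transitivity of $\llcurly$ yields $\Lambda^-\llcurly\Lambda^{-\epsilon}$.

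The main obstacle is this last step: extracting uniform ``slack'' from a strictly positive isotopy so that it terminates strictly below $\Lambda^0$ rather than merely at it. This relies essentially on closedness of $L$ to secure the uniform lower bound $h_0>0$; once it is in hand, the rest is routine, given the minimax values $c_\pm(\Lambda^h)=\min h,\,\max h$ from Example~\ref{graphs} and the graph-level characterisation of positivity from Example~\ref{PosInJets}.
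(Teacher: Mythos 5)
Your proof is correct and follows essentially the same route as the paper's: one inclusion comes from the monotonicity of the minimax invariants $c_\pm$ under non-negative isotopies (the paper instead cites the equivalent statement that $\Lambda^f\lle\Lambda^g$ iff $f\le g$ pointwise), and the other from linear interpolation of functions together with the observation that a positive isotopy ending at a graph has uniform slack, so that Reeb-shift intervals $(\tau_{-\eps}\Lambda^g,\tau_{\eps}\Lambda^g)$ form a neighbourhood base. Your version merely makes explicit two steps the paper delegates to a citation and to its earlier remark that such intervals form a base for the topology.
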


\begin{proof}
If $f,g\in C^\infty(L)$, then $f\le g$ pointwise on $L$ if
(and, obviously, only if) $\Lambda^f\lle\Lambda^g$ by~\cite[Corollary~5.4]{ChNe1}.
Hence, $|f-g|<\eps$ on $L$ if and only if $\tau_{-\eps}(\Lambda^g)\llcurly\Lambda^f\llcurly\tau_\eps(\Lambda^g)$,
and the result follows.
\end{proof}

\begin{rem}
Shelukhin~\cite{Sh} used the Hofer distance functional~\cite{Ho} 
to define a norm on $\mathrm{Cont}_0(Y,\ker\alpha)$.
This norm is not conjugation invariant (in that case it would
have to be discrete~\cite{FrPoRo}) but it defines an invariant 
topology on $\mathrm{Cont}_0(Y,\ker\alpha)$ by~\cite[Lemma~10]{Sh}. 
The associated analogue of Chekanov's metric~\cite{Che2}  
considered by Rosen and Zhang~\cite{RoZh}
defines a $\mathrm{Cont}_0(Y,\ker\alpha)$-invariant topology
on Legendrian isotopy classes in~$Y$. 
Proposition~\ref{IntC0} implies that those topologies coincide
with the interval topology `infinitesimally'. 
However, their global behaviour seems to be different. 
For instance, Shelukhin's  norm is non\nobreakdash-degenerate
for every closed contact manifold and therefore the
topology defined by it is always Hausdorff.
\end{rem}

\subsection{Interval topology on $\Leg(ST^*_{\{\mathrm{pt}\}}M)$}
As in~\cite[\S 6]{ChNe1} and~\cite{CFP}, let us combine the results 
of~\S\ref{ZeroSect} for $L=S^{n-1}$ with the `hodograph' contactomorphism 
$$
\Jet^1(S^{n-1})\overset{\cong}{\longrightarrow} ST^*\R^n
$$
taking the zero section to a fibre. 

\begin{cor}
\label{strn}
The interval topology is Hausdorff on the Legendrian isotopy class of the fibre
of\/ $ST^*\R^n$.
\end{cor}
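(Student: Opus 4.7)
The plan is to transport Theorem~\ref{0sect} with $L=S^{n-1}$ across the hodograph contactomorphism mentioned just above the statement.

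First, I would recall the explicit form of the classical hodograph identification
$$
\Phi:\Jet^1(S^{n-1})\longrightarrow ST^*\R^n,
$$
under which a $1$-jet $(q,p,u)$ is sent to the co-oriented contact element at the point $uq+p^{\sharp}\in\R^n$ whose outward conormal direction is $q$ (here $p^{\sharp}\in T_qS^{n-1}\subset\R^n$ is the vector corresponding to $p\in T_q^*S^{n-1}$ via the round metric). I would check (or quote from \cite{ChNe1, CFP}) that $\Phi$ intertwines the standard contact structures and that $\Phi(\Lambda^0)=ST^*_0\R^n$, i.e.\ that the zero section is carried to the fibre over the origin.

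Next I would invoke the general invariance of the interval topology under contactomorphisms noted in~\S\ref{PosNneg}: $\Phi$ preserves positive Legendrian isotopies, and hence preserves intervals. Consequently, $\Phi$ restricts to a bijection
$$
\Phi_*:\Leg(\Lambda^0)\longrightarrow \Leg(ST^*_0\R^n)
$$
that is a homeomorphism with respect to the interval topologies on the two sides. Theorem~\ref{0sect} applied with $L=S^{n-1}$ (a closed connected manifold for $n\ge 2$) gives that the domain is Hausdorff, so the target is Hausdorff as well. Since any two fibres of $ST^*\R^n$ are intertwined by the translation contactomorphisms of $ST^*\R^n$ covering translations of $\R^n$, the Hausdorff conclusion extends to the Legendrian isotopy class of any fibre, which is the form in which the statement is given.

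The statement is essentially an immediate corollary and there is no analytic obstacle beyond what has already been overcome in Theorem~\ref{0sect}; the only real task is the geometric bookkeeping of the hodograph identification and the verification that it carries the zero section to a fibre. The one minor caveat is the degenerate case $n=1$, where $S^{n-1}=S^0$ is disconnected and Theorem~\ref{0sect} does not directly apply; this case is however covered by the elementary one-dimensional discussion of Example~\ref{JetQuot}(ii) applied on each component of $ST^*\R\cong\R\sqcup\R$.
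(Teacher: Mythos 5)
Your proposal is correct and is essentially the paper's own argument: the corollary is obtained by transporting Theorem~\ref{0sect} for $L=S^{n-1}$ through the hodograph contactomorphism $\Jet^1(S^{n-1})\cong ST^*\R^n$, which carries the zero section to a fibre, together with the contactomorphism-invariance of the interval topology. Your extra remarks (explicit form of the hodograph map, translations identifying fibres, the degenerate case $n=1$) are consistent with the paper, which implicitly assumes $n\ge 2$ here and imposes that restriction explicitly in Theorem~\ref{stcovrn}.
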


\begin{lem}
\label{cover}
Let $\widetilde M$ be a connected smooth cover of a manifold $M$ with $\dim M\ge 2$.
Assume that the interval topology is Hausdorff on the Legendrian isotopy class
of the fibre of $ST^*\widetilde M$. Then the same holds for~$ST^*M$.
\end{lem}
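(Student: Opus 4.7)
The plan is to prove the contrapositive: assume the interval topology on $\Leg(L_0)\subset ST^*M$ (with $L_0=ST^*_{x_0}M$) fails to be Hausdorff, and deduce the same for $\Leg(\wt L_0)\subset ST^*\wt M$ (with $\wt L_0=ST^*_{\tilde x_0}\wt M$, $\wt p(\tilde x_0)=x_0$). First I would equip $\wt M$ with a Riemannian metric invariant under the deck group $G$ of $\wt p\colon\wt M\to M$; this descends to $M$, and the induced covering $\wt p\colon ST^*\wt M\to ST^*M$ pulls back the canonical contact form and intertwines the Reeb flows $\tau_t$. Hence $G$ acts on $ST^*\wt M$ by contactomorphisms commuting with $\tau_t$, and path-lifting yields a unique lift of any Legendrian isotopy starting at $L_0$, preserving positivity. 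Throughout such a lift, the projection $\wt p$ restricted to each Legendrian remains a diffeomorphism, so one obtains a continuous surjection $\wt p_*\colon\Leg(\wt L_0)\to\Leg(L_0)$ whose fibres are free $G$-orbits of lifts.

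By Proposition~\ref{T0Haus} the failure of Hausdorff-ness is equivalent to the failure of T0, so there exist distinct $L_1,L_2\in\Leg(L_0)$ that are topologically indistinguishable. Since Reeb intervals form a local base, this implies
$$\tau_{-\eps}(L_1)\llcurly L_2\llcurly \tau_\eps(L_1)\quad\text{and}\quad\tau_{-\eps}(L_2)\llcurly L_1\llcurly \tau_\eps(L_2)\quad\text{for every }\eps>0.$$
Fix lifts $\wt L_1$ of $L_1$ and $\wt L_2$ of $L_2$. For each $\eps>0$, lifting the positive isotopy $L_1\llcurly\tau_\eps(L_2)$ from $\wt L_1$ yields $\wt L_1\llcurly\tau_\eps(h'_\eps\wt L_2)$ for some $h'_\eps\in G$, while lifting $\tau_{-\eps}(L_2)\llcurly L_1$ to end at $\wt L_1$ yields $\tau_{-\eps}(h''_\eps\wt L_2)\llcurly\wt L_1$ for some $h''_\eps\in G$.

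The crucial step is to show that $h'_\eps=h''_\eps$ for every sufficiently small $\eps>0$. Once this is in hand, denote the common value by $h^*$. The chain
$$\tau_{-\eps}(h^*\wt L_2)\llcurly\wt L_1\llcurly\tau_\eps(h^*\wt L_2)$$
then holds for all small $\eps>0$, so $\wt L_1$ belongs to every Reeb interval neighbourhood of $h^*\wt L_2$. Since Reeb intervals form a local base at $h^*\wt L_2$, and $\wt L_1\ne h^*\wt L_2$ (their projections $L_1\ne L_2$), this contradicts Hausdorff-ness of $\Leg(\wt L_0)$.

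The hard part will be the identification $h'_\eps=h''_\eps$. My plan combines three ingredients: (i) orderability of $\Leg(\wt L_0)$, from Hausdorff-ness by Proposition~\ref{Haus2order}; (ii) proper discontinuity of the $G$-action on $ST^*\wt M$, which makes the orbit $\{g\wt L_2\}_{g\in G}$ discrete in the $C^\infty$ topology; and (iii) Remark~\ref{neighbsize}, which together with (i) and (ii) produces a uniform $\eps_0>0$ such that $g\wt L_2\notin(\tau_{-\eps}(\wt L_2),\tau_\eps(\wt L_2))$ for every $g\in G\setminus\{e\}$ and $\eps<\eps_0$. Concatenating the two lifted chains yields $\tau_{-\eps}(h''_\eps\wt L_2)\llcurly\tau_\eps(h'_\eps\wt L_2)$; applying $(h''_\eps)^{-1}\tau_\eps$ and writing $k_\eps=(h''_\eps)^{-1}h'_\eps$ then produces $\wt L_2\llcurly\tau_{2\eps}(k_\eps\wt L_2)$. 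Supposing $k_\eps\ne e$ and combining this with the analogous chain from the symmetric lifting of $\tau_{-\eps}(L_1)\llcurly L_2\llcurly\tau_\eps(L_1)$, Reeb-flow manipulations would place a non-trivial $G$-translate $k_\eps^{-1}\wt L_2$ into the interval $(\tau_{-2\eps}(\wt L_2),\tau_{2\eps}(\wt L_2))$ for $\eps<\eps_0$, contradicting (iii) and forcing $h'_\eps=h''_\eps$.
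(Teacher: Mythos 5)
Your overall architecture --- lift the two chains of positive isotopies to $ST^*\wt M$, control the deck-transformation ambiguity, and then contradict Hausdorff-ness upstairs --- is the same as the paper's, and your ingredients (i)--(iii) are all correct as stated. The gap is that the ``crucial step'' $h'_\eps=h''_\eps$ is not actually proved, and the mechanism you sketch for it cannot work for a general pair $L_1,L_2$. Concatenating the two lifted chains gives only the \emph{one-sided} relation $\wt L_2\llcurly\tau_{2\eps}(k_\eps\wt L_2)$, equivalently $k_\eps^{-1}\wt L_2\llcurly\tau_{2\eps}(\wt L_2)$. To place $k_\eps^{-1}\wt L_2$ (or $k_\eps\wt L_2$) inside a two-sided interval $\bigl(\tau_{-2\eps}(\wt L_2),\tau_{2\eps}(\wt L_2)\bigr)$ and invoke your ingredient (iii), you would also need $\wt L_2\llcurly\tau_{2\eps}(k_\eps^{-1}\wt L_2)$. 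But the symmetric chain obtained by lifting $\tau_{-\eps}(L_1)\llcurly L_2\llcurly\tau_\eps(L_1)$ only produces a relation $\wt L_1\llcurly\tau_{2\eps}\bigl(h'_\eps(h''_\eps)^{-1}\wt L_1\bigr)$ about translates of $\wt L_1$; indeed, all relations derivable from the two basic lifted inequalities by $G$-translation, Reeb shifts and transitivity involve only \emph{positive} powers of $k_\eps$ (and of its $\wt L_1$-counterpart), never $k_\eps^{-1}$. A one-sided relation $\tau_{-2\eps}(\wt L_2)\llcurly k_\eps\wt L_2$ is perfectly compatible with orderability and with disjointness of the translates, so no contradiction is reached and (iii) is never applicable.

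The paper closes exactly this gap by normalising, via the $\mathrm{Cont}_0$-invariance of the interval topology, so that one of the two indistinguishable Legendrians is the fibre $F=ST^*_xM$ itself. Then the deck translates of its lift are the fibres $\wt F_j=ST^*_{\wt x_j}\wt M$, and for these one has a much stronger statement than (iii): there is \emph{no} non-negative isotopy from $\wt F_j$ to $\wt F_k$ for $j\ne k$ in \emph{either} direction, because a contactomorphism induced by a diffeomorphism of $\wt M$ exchanging the base points would convert such an isotopy into one going the other way, and the concatenation would be a non-constant non-negative loop, contradicting orderability (Proposition~\ref{Haus2order}). With this, even the one-sided relation $\wt F_1\llcurly\tau_{2\eps}(\wt F_k)$, upgraded to $\wt F_1\llcurly\wt F_k$ for small $\eps$ by the compactly supported correction from the proof of Proposition~\ref{T0Haus}, is already a contradiction, which is what forces the lifted isotopy to return to $\wt F_1$. (A smaller issue in your write-up: even granting $h'_\eps=h''_\eps$, the common value may depend on $\eps$; one needs the finiteness argument via Remark~\ref{neighbsize} and proper discontinuity --- which the paper also uses for the lifts $\wt L_\eps$ --- to extract a single $h^*$ working for a sequence $\eps_n\to 0$.) If you rebuild your argument with $L_1=F$ and add the fibre-swapping observation, your plan goes through and essentially reproduces the paper's proof.
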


\begin{rem}
Taking $M=S^1$ and $\widetilde M=\R$ shows that assuming $\dim M\ge 2$ 
is necessary, cf.\ Example~\ref{JetQuot}(ii).
\end{rem}

\begin{proof}[Proof of the lemma]
Let $p:\widetilde M\to M$ be the covering map and $P:ST^*\widetilde M\to ST^*M$ 
the induced projection of the spherical cotangent bundles. Fix a contact form 
$\alpha$ defining the standard contact structure on $ST^*M$. 
Then $\widetilde\alpha=P^*\alpha$ is a contact form defining 
the standard contact structure on $ST^*\widetilde M$. 
The Reeb flows $\tau_s$ and $\widetilde\tau_s$ associated 
to $\alpha$ and $\widetilde\alpha$ satisfy 
$P\circ \widetilde\tau_s=\tau_s$, $s\in\R$.

Pick a point $x\in M$ and let $\widetilde x_j$, $j\ge 1$, be its pre-images in $\widetilde M$.
Set $F=ST_x^*M$ and $\widetilde F_j=ST_{\widetilde x_j}^*\widetilde M$. 
Since $\Leg(\widetilde F_1)$ is orderable by Proposition~\ref{Haus2order},
there does not exist a non-constant non-negative Legendrian 
loop based at~$\widetilde F_1$. Furthermore, there does not exist a non-negative
isotopy from $\widetilde F_j$ to $\widetilde F_k$ in $ST^*\widetilde M$
for $j\ne k$ because a contactomorphism interchanging these
two fibres (induced by a diffeomorphism of $\widetilde M$ interchanging
their base points) would map it to a non-negative isotopy from $\widetilde F_k$ 
to $\widetilde F_j$ and the concatenation of the two isotopies 
would be a non-constant non-negative loop in the fibre class.

Suppose now that the interval topology is not Hausdorff on $\Leg(F)$.
By Proposition~\ref{T0Haus}, there exists a Legendrian $L\ne F\in\Leg(F)$
contained in every interval of the form $\bigl(\tau_{-\eps}(F),\tau_\eps(F)\bigr)$, $\eps>0$.
In other words, for every $\eps>0$, there exists a positive Legendrian
isotopy from $\tau_{-\eps}(F)$ to $\tau_\eps(F)$ passing through~$L$.
By the covering homotopy theorem, this isotopy lifts to a positive Legendrian isotopy
from $\widetilde\tau_{-\eps}(\widetilde F_1)$ to $\widetilde\tau_\eps(\widetilde F_k)$ 
for some~$k\ge 1$ passing through a Legendrian lift $\widetilde L_\eps$ of $L$. 

Note first that if $k\ne 1$ for small enough~$\eps$, 
then since $\widetilde F_k\cap \widetilde F_1=\varnothing$,
it would follow from the argument in the proof of Proposition~\ref{T0Haus} 
that there is a positive isotopy from $\widetilde F_1$ to $\widetilde F_k$, 
which is impossible. 
Hence, $\widetilde L_\eps\in \bigl(\widetilde\tau_{-\eps}(\widetilde F_1),\widetilde\tau_\eps(\widetilde F_1)\bigr)$
for all sufficiently small~$\eps$.
Using Remark~\ref{neighbsize}, we see that $\widetilde L_\eps$ must then intersect a fixed small
neighbourhood of $\widetilde F_1$ in $ST^*\widetilde M$.
However, there are only finitely many lifts of $L$ to $ST^*\widetilde M$
with that property. Thus, at least one such lift is contained 
in arbitrarily small interval neighbourhoods of the fibre~$\widetilde F_1$, 
which contradicts the assumption of the lemma.
\end{proof}

From Lemma~\ref{cover} and Corollary~\ref{strn}, we obtain the following
(potential) improvement of the orderability result in~\cite[Corollary~6.2]{ChNe1}.

\begin{thm}
\label{stcovrn}
The interval topology is Hausdorff on the Legendrian isotopy class
of the fibre of $ST^*M$ for any manifold $M$ smoothly covered
by an open subset of\/~$\R^n$, $n\ge 2$.
\end{thm}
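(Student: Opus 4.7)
Let $p\colon U\to M$ be the smooth covering with $U\subset\R^n$ open and connected. By Lemma~\ref{cover}, it suffices to show that the interval topology is Hausdorff on the Legendrian isotopy class $\mathcal{L}_U$ of a fibre $F$ in $ST^*U$. Let $\mathcal{L}_{\R^n}$ denote the Legendrian isotopy class of the same fibre $F$ viewed in $ST^*\R^n$. The plan is to exploit the open contact embedding $\iota\colon ST^*U\hookrightarrow ST^*\R^n$: it yields a tautological injection $\iota_*\colon\mathcal{L}_U\hookrightarrow\mathcal{L}_{\R^n}$, and since every positive Legendrian isotopy in $ST^*U$ is also positive in $ST^*\R^n$, one expects $\iota_*$ to be continuous for the interval topologies. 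Combined with the Hausdorff property of the target (Corollary~\ref{strn}) and the injectivity of $\iota_*$, this yields the theorem.

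The bulk of the argument is thus the continuity of $\iota_*$. Fix $L\in\mathcal{L}_U$ and a basic neighbourhood $(L^-,L^+)\ni L$ in $\mathcal{L}_{\R^n}$, so $L^-\llcurly L\llcurly L^+$ in $ST^*\R^n$. Using Example~\ref{PosInJets}, identify a neighbourhood of $L$ in $ST^*U$ with a neighbourhood of the zero section of $\Jet^1(L)$, and for small $\delta>0$ let $L^\pm_\delta\in\mathcal{L}_U$ be the graphs of the constant functions $\pm\delta$ on $L$ in these coordinates. Linear interpolation in the constant gives $L^-_\delta\llcurly L\llcurly L^+_\delta$ in $ST^*U$, and $L^\pm_\delta$ converge to $L$ in $C^\infty$ as $\delta\to 0$. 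Thus $W:=(L^-_\delta,L^+_\delta)_U$ is an open neighbourhood of $L$ in $\mathcal{L}_U$, and for every $L'\in W$ the relations $L^-_\delta\llcurly L'\llcurly L^+_\delta$ hold in $ST^*U$ and hence in $ST^*\R^n$. By transitivity of $\llcurly$, the inclusion $\iota_*(W)\subset (L^-,L^+)$ would follow from
$$
L^-\llcurly L^-_\delta\quad\text{and}\quad L^+_\delta\llcurly L^+
$$
in $ST^*\R^n$ for $\delta$ sufficiently small.

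Both of these are instances of a general $C^\infty$-openness property of $\llcurly$: if $A\llcurly B$ in a contact manifold, then $A\llcurly B'$ for every Legendrian $B'$ sufficiently $C^\infty$-close to $B$, and symmetrically in the first argument. This is essentially the content of the observation from \S\ref{ITopCont} that intervals form a base for a topology, and follows from Example~\ref{PosInJets} in a Darboux chart around $B$: the positive isotopy witnessing $A\llcurly B$ can be truncated at some $t=1-\eta$ to reach a Legendrian that, in $\Jet^1(B)$-coordinates, is the graph of a strictly negative function, after which any $B'$ corresponding to a pointwise larger function is reachable by a monotone interpolation, yielding a positive isotopy from $A$ to $B'$. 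Pinning down this openness is the main technical point; once granted, taking $\delta$ small enough gives the two missing relations, hence the continuity of $\iota_*$, and the theorem.
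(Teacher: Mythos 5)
Your proposal is correct and follows the same route as the paper, which simply combines Lemma~\ref{cover} (reducing $M$ to its cover $U\subset\R^n$) with Corollary~\ref{strn}. The one point you treat at length --- that Hausdorff-ness for $ST^*\R^n$ passes to the open subset $ST^*U$ via the injective, interval-continuous inclusion $\mathcal{L}_U\hookrightarrow\mathcal{L}_{\R^n}$, using the $C^1$-openness of $\llcurly$ in each argument --- is left implicit in the paper (it is the same restriction step as in the orderability argument of \cite[Corollary~6.2]{ChNe1}), and your truncate-and-interpolate justification of that openness is the standard and correct one.
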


\begin{rem}
\label{dim2and3}
The theorem applies to every surface other than $S^2$ and $\R\mathrm{P}^2$
and to every compact three-manifold other than a quotient of $S^3$ by 
a finite group of isometries of the standard round metric, 
see the discussion starting at the bottom of p.\,1321 in~\cite{ChNe1}.
\end{rem}

Proposition~\ref{leg2cont} shows now that the interval topology 
on contactomorphisms is Hausdorff for another class of contact manifolds.

\begin{cor}
The interval topology is Hausdorff on $\mathrm{Cont}_0(ST^*M)$ for any manifold $M$ 
smoothly covered by an open subset of\/~$\R^n$, $n\ge 2$.
\end{cor}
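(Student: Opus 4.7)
The plan is to deduce the corollary by directly combining Theorem \ref{stcovrn} with Proposition \ref{leg2cont}, in exact analogy with the derivation of Corollary \ref{ContHausJet} from Theorem \ref{0sect} and the same proposition. Concretely, I would take $Y = ST^*M$ with its standard contact structure $\ker\alpha$ and let $F = ST_x^*M$ be a cotangent fibre over an arbitrary point $x\in M$. This $F$ is a closed Legendrian submanifold diffeomorphic to $S^{n-1}$, so it fits the hypothesis of Proposition \ref{leg2cont}.

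The input from Theorem \ref{stcovrn} is that, under the assumption on $M$, the interval topology on the Legendrian isotopy class $\Leg(F)\subset\Leg(ST^*M)$ is Hausdorff. This is exactly the ``Hausdorff Legendrian class inside $Y$'' that Proposition \ref{leg2cont} demands as its hypothesis. Feeding this into the proposition yields Hausdorff-ness of the interval topology on $\mathrm{Cont}_0(Y,\ker\alpha) = \mathrm{Cont}_0(ST^*M)$, which is the claim.

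There is no obstacle of substance: the nontrivial content has already been absorbed into Theorem \ref{stcovrn} (which itself combines Corollary \ref{strn} with the covering Lemma \ref{cover}) and into the general transport mechanism of Proposition \ref{leg2cont}. The only routine verification is that the fibre class is a legitimate class of \emph{closed} Legendrians, which is immediate. In writing the proof I would simply cite the two results and note the parallel with Corollary \ref{ContHausJet}, keeping the text to a single sentence or two.
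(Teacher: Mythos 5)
Your proposal is correct and matches the paper's own derivation exactly: the corollary is obtained by feeding the Hausdorff-ness of the interval topology on the fibre class (Theorem~\ref{stcovrn}) into the transfer mechanism of Proposition~\ref{leg2cont}, just as Corollary~\ref{ContHausJet} follows from Theorem~\ref{0sect}. Nothing further is needed.
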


\section{Relation to Lorentz geometry}

\subsection{Causality and Alexandrov topology}
A {\it spacetime\/} is a connected Lorentz manifold $(\XX,\langle\text{ },\!\text{ }\rangle)$ equipped
with a time-orientation, that is, a continous choice of the future hemicone
$$
C^{\uparrow}_x\subset \{v\in T_x\XX\mid \langle v,v\rangle\ge 0, v\ne 0\}
$$
in the cone of non-spacelike vectors at each point~$x\in\XX$.
(We are assuming that the Lorentz metric has signature $(+,-,\dots,-)$
so that $\langle v,v\rangle>0$ for timelike vectors and $\langle v,v\rangle<0$ for spacelike vectors.)
The vectors in $C^\uparrow_x$ are called {\it future-pointing}.
A piecewise smooth curve in $\XX$ is {\it future-directed\/}
if all its tangent vectors are future-pointing.

The {\it causality relation\/} $\le$  on $\XX$ is defined
by setting $x\le y$ if either $x=y$ or there is a future-directed
curve connecting $x$ to~$y$. 
The {\it chronology relation\/} $\ll$ is defined similarly
by writing $x\ll y$ if there is a future-directed timelike curve 
connecting $x$ to~$y$.

$\XX$ is called {\it causal\/} if it does not contain closed 
future-directed curves. (This is equivalent to requiring
that $\le$ is a partial order.) $\XX$ is {\it strongly 
causal\/} if every point has an arbitrarily small neighbourhood 
such that every future-directed curve intersects it at most once.

The {\it Alexandrov topology\/} on $\XX$ is the interval topology
associated to the chronology relation. (It is named after Alexander D.\ Alexandrov
and must not be confused with the Alexandrov topology on posets
named after Pavel S.\ Alexandrov.) This topology was introduced
by Kronheimer and Penrose who proved the following result
(see \cite[Theorem 4.24]{Pe} or~\cite[Proposition 3.11]{BEE}).

\begin{prop}
\label{AlexHaus}
The Alexandrov topology on a spacetime~$\XX$ is Hausdorff if and only
if~$\XX$ is strongly causal. In that case the Alexandrov
topology coincides with the manifold topology on~$\XX$.
\end{prop}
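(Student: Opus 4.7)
The plan is to establish the stronger statement that strong causality is equivalent to the Alexandrov topology coinciding with the manifold topology, which also yields the Hausdorff equivalence in view of the manifold topology being Hausdorff.

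For the implication that strong causality forces the Alexandrov topology to equal the manifold topology, I would begin by recalling that $I^{\pm}(x)$ is open in the manifold topology for every $x\in\XX$, the standard fact proved via convex normal neighbourhoods together with timelike geodesic perturbations. This immediately shows that every chronological diamond $I(x,y):= I^{+}(x)\cap I^{-}(y)$ is manifold-open, so the Alexandrov topology is contained in the manifold topology. For the opposite containment, given $p\in \XX$ and a manifold-open $U\ni p$, strong causality at $p$ yields a causally convex sub-neighbourhood $V\subseteq U$, that is, one in which every future-directed causal curve with endpoints in $V$ is entirely contained. Shrinking $V$ inside a convex normal neighbourhood of $p$, I pick a short timelike geodesic through $p$ whose past and future endpoints $x,y$ lie in $V$. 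Then $p\in I(x,y)$, and any $z\in I(x,y)$ is joined to $x$ and to $y$ by timelike curves whose concatenation is a causal curve from $x$ to $y$; causal convexity of $V$ forces $z\in V\subseteq U$.

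For the converse, assume the Alexandrov topology is Hausdorff. If $\XX$ contained a closed future-directed timelike curve, then two distinct points $p,q$ on it would satisfy $p\ll q\ll p$, and the push-up property of the chronology relation would imply that every diamond containing $p$ also contains $q$, violating T0 and a fortiori Hausdorffness. Hence $\XX$ must be causal. Suppose strong causality still fails at some $p$. Then there exist a manifold-neighbourhood $U\ni p$ and a sequence $V_n\searrow\{p\}$ of sub-neighbourhoods together with future-directed causal curves $\gamma_n$ with endpoints $a_n,b_n\in V_n$ that leave $U$. Applying the limit curve lemma (after reparametrising by arc length with respect to an auxiliary Riemannian metric), I would extract a non-constant future-directed causal limit curve through $p$ meeting $\XX\setminus U$, together with points $z_n\in\gamma_n$ converging to some $q$ outside $U$; in particular $q\ne p$.

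The main obstacle is showing that $p$ and the limit point $q$ have no disjoint Alexandrov neighbourhoods. Given basic intervals $I(x_p,y_p)\ni p$ and $I(x_q,y_q)\ni q$, manifold-openness of $I^{\pm}$ yields $x_p\ll a_n$ and $b_n\ll y_p$ for all sufficiently large $n$. The push-up property $(\ll)\circ(\le)\subseteq(\ll)$ combined with the causal chain $a_n\le z_n\le b_n$ along $\gamma_n$ then gives $x_p\ll z_n\ll y_p$, so $z_n\in I(x_p,y_p)$. On the other hand, $z_n\to q\in I(x_q,y_q)$ and manifold-openness of the diamond give $z_n\in I(x_q,y_q)$ for large $n$. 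Hence $z_n\in I(x_p,y_p)\cap I(x_q,y_q)$, contradicting Hausdorffness and completing the plan.
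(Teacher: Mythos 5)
Your plan is correct and is exactly the classical Kronheimer--Penrose argument (manifold-openness of $I^{\pm}$ plus causally convex neighbourhoods for one direction; the limit-curve and push-up construction of a non-separable pair $p,q$ for the other), which is the proof given in the references \cite{Pe} and \cite{BEE} that the paper cites for this proposition rather than proving it. The only blemish is the intermediate paragraph deducing causality from Hausdorffness: it is redundant, since the subsequent limit-curve argument needs no causality hypothesis, and in any case excluding closed timelike curves only yields chronology rather than causality.
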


Let us point out that assuming {\it strong\/} causality is important here. 
For instance, the Alexandrov topology is {\it not\/} Hausdorff 
on the causal spacetime shown in~\cite[Figure 38]{HaEl}.
(The points on the dashed null geodesic do not have disjoint
interval neighbourhoods.) However, in contrast to Proposition~\ref{T0Haus},
the Alexandrov topology in that example is nevertheless~T0 and even~T1.
At the same time, causality may be equivalent to strong causality
under additional compactness assumptions, see e.g.~\cite{BeSa2}.

\begin{rem}
It is unclear whether a useful analogue of strong causality 
can be defined for a Legendrian isotopy class~$\mathcal{L}$. 
Such a definition would require a background topology on~$\mathcal{L}$
playing the role of the manifold topology on~$\XX$. 
That topology should be Hausdorff but not `too fine' 
compared to the interval topology, lest the notion become vacuous. 
For instance, it is easy to see (e.g.\ by considering wavefronts with swallowtails) 
that {\it no\/} Legendrian isotopy class can be `strongly orderable' 
with respect to the $C^k$-topology for any $k\ge 0$.
\end{rem}  

\subsection{Null geodesics, skies, and contact geometry}
\label{Skies}
Suppose now that $\XX$ is a {\it globally hyperbolic\/} spacetime,
i.e.\ it is strongly causal and the causal segments $\{z\in\XX\mid x\le z\le y\}$
are compact for all $x,y\in\XX$. By the Bernal--S\'anchez smooth
splitting theorem~\cite{BeSa1}, a globally hyperbolic spacetime
is foliated by smooth spacelike Cauchy (hyper)surfaces, where a 
{\it Cauchy surface\/} is a subset of a spacetime such that every
inextendible future-directed curve intersects it exactly once.

The {\it space of null geodesics\/} of $\XX$ is the set $\mathfrak N_\XX$ 
of equivalence classes of inextendible future-directed null geodesics 
up to an orientation preserving affine reparametrisation. 
This space carries a canonical structure of a contact 
manifold contactomorphic to the spherical cotangent bundle 
of a Cauchy surface in~$\XX$, see e.g.\ \cite[pp.~252--253]{NT}
or \cite[\S\S 1-2]{ChNe4}.  

The set $\mathfrak S_x\subset\mathfrak N_\XX$ of all null geodesics 
passing through a point $x\in\XX$ is a Legendrian sphere in $\mathfrak N_\XX$
called the {\it sky\/} (or the {\it celestial sphere\/}) of that point, see \cite[\S 4]{ChNe1}.
Since $\XX$ is connected, all skies lie in the same Legendrian isotopy class.
For any Cauchy surface $M\subset\XX$, the associated contactomorphism 
$\rho_M:\mathfrak N_\XX\overset{\cong}{\longrightarrow} ST^*M$ 
takes the sky of a point $x\in M$ to the fibre $ST_x^*M$
and so maps the Legendrian isotopy class of skies to
the Legendrian isotopy class of the fibre.

A conformal (or, equivalently, causal~\cite{Ma}) isomorphism $f:\XX\to\XX'$
maps null pregeodesics to null pregeodesics \cite[Lemma 9.17]{BEE}.
Hence, it induces a contactomorphism $f_*:\mathfrak{N}_\XX\to \mathfrak{N}_{\XX'}$
such that $f_*(\mathfrak{S}_x)=\mathfrak{S}_{f(x)}$.

The map $x\mapsto\mathfrak S_x$ is compatible with the
relations $\ll$ and $\le$ on $\XX$ and $\llcurly$ and $\lle$ 
on the Legendrian isotopy class of skies. For $\le$ and $\lle$,
this was pointed out in~\cite{ChNe1} and elaborated upon in~\cite{BIL}.
 
\begin{prop}
The Legendrian isotopy $\mathfrak S_{\beta(t)}$ corresponding to
a curve $\beta:(a,b)\to\XX$ is non-negative {\rm (}respectively, positive{\rm )} if and
only if that curve is future-directed {\rm (}respectively, future-directed timelike{\rm )}.
\end{prop}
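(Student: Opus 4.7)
The plan is to reduce the equivalence to a pointwise linear-algebraic statement: at each $t_0\in(a,b)$, a contact form $\alpha$ on $\mathfrak N_\XX$ evaluated on the velocity of the sky curve at any point of the sky should equal, up to a strictly positive factor, the Lorentzian inner product of $\dot\beta(t_0)$ with the future-pointing null direction corresponding to that sky point. Since non-negativity (respectively positivity) of a Legendrian isotopy does not depend on the chosen parametrisation, I would pick any convenient $\ell_t:\mathfrak S_{\beta(t_0)}\to\mathfrak S_{\beta(t)}$ near $t_0$, for instance the one obtained by parallel-transporting the null tangent $v$ along $\beta$ and taking the corresponding null geodesic through $\beta(t)$.

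The main geometric step is then the following computation. Fix $t_0$, set $x=\beta(t_0)$, $w=\dot\beta(t_0)\in T_x\XX$, and let $\gamma\in\mathfrak S_x$ be a null geodesic with future-pointing null tangent $v$ at $x$. Choose a small smooth spacelike hypersurface $M$ through a neighbourhood of $x$ that is transverse to the null geodesics of interest (by localising the smooth splitting of \cite{BeSa1}); the local contactomorphism $\rho_M:\mathfrak N_\XX\to ST^*M$ of \S\ref{Skies} sends $\gamma$ to the pair $(q,[\eta])$ with $q=\gamma\cap M$ and $\eta=\langle v,\cdot\rangle|_{T_qM}$. Differentiating the explicit formula for $\rho_M(\ell_t(\gamma))$ in $t$ and pairing with the canonical $1$-form on $T^*M$ gives, after a short direct calculation in Fermi coordinates along $\gamma$ (equivalently, in Minkowski normal coordinates at $x$), the identity
\[
\alpha\!\left(\tfrac{d}{dt}\big|_{t=t_0}\ell_t(\gamma)\right)\;=\;c\cdot\langle w,v\rangle
\]
for some positive constant $c$ depending only on the normalisations of $v$ and $\alpha$.

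Once this is established, I would conclude by invoking the elementary linear algebra of the Lorentzian inner product on $T_x\XX$: a vector $w$ is future-pointing causal if and only if $\langle w,v\rangle\ge 0$ for every future-pointing null $v\in T_x\XX$, and future-pointing timelike if and only if the pairing is strictly positive for every such $v$. The forward directions are the reverse Cauchy--Schwarz inequality for causal vectors, while the converses follow, for $\dim\XX\ge 2$, from the fact that the future null cone spans the closed future causal cone and that any spacelike, past-pointing, or zero $w$ admits a future null $v$ with $\langle w,v\rangle\le 0$ (strictly, unless $w=0$). Combined with the identity above applied pointwise, $\{\mathfrak S_{\beta(t)}\}$ is non-negative (respectively positive) at every $t_0$ if and only if $\dot\beta(t_0)$ is future-causal (respectively future timelike) for every $t_0$, which is exactly the statement to be proved.

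The hard part will be the computation in the middle paragraph, which requires careful bookkeeping of the identification between the canonical contact form on $\mathfrak N_\XX$ and the canonical $1$-form on $T^*M$, together with the resulting sign convention; all other steps are either definitional or standard Lorentzian linear algebra.
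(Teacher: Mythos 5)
Your proposal is correct and follows essentially the same route as the paper: the key identity you assert, $\alpha\bigl(\tfrac{d}{dt}\ell_t\bigr)=c\,\langle \dot\beta(t_0),v\rangle$ with $c>0$, is exactly the paper's formula $\alpha_M(\mathbf{v})=\langle\dot\gamma_0(0),\dot\beta(t_0)\rangle/\langle\dot\gamma_0(0),n_M(x)\rangle$ (obtained there by citing the Jacobi-field expression for $\alpha_M$ from \cite{ChNe4} rather than recomputing it via $\rho_M$), and the concluding Lorentzian linear algebra about pairing with all future-pointing null directions is identical. The only difference is that you would rederive the contact-form evaluation in normal coordinates where the paper quotes it, so make sure the hypersurface $M$ is taken to pass through $x=\beta(t_0)$ itself.
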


\begin{proof}
For the sake of completeness, we give a short proof 
by computation using the notation and formulas from~\cite{ChNe4}. 
(See~\cite[\S 4]{ChNe1} for a geometric argument 
and \cite[Corollary 3]{BIL} for another computation.
Note that both references used the opposite convention for 
the signature of the Lorentz metric, which `reversed' the
relations.) 

Let $\ell_t:S\to \mathfrak N_\XX$, $t\in (a,b)$, be a parametrisation
of the Legendrian isotopy $\mathfrak S_{\beta(t)}$. (Here $S$
denotes the sphere of dimension $\dim\XX-2$.)
Given $(t_0,\zeta)\in (a,b)\times S$, let
$$
\mathbf{v}=\mathbf{v}(t_0,\zeta):={\left.\frac{d}{dt}\right|}_{t=t_0} \ell_t(\zeta)\in T\mathfrak N_\XX
$$
and choose a family of future-directed null geodesics $\gamma_s:(-1,1)\to\XX$, $s\in(-\eps,\eps)$,
so that $\gamma_s(0)=\beta(t_0+s)$ and the maximal extension of $\gamma_s$
represents the equivalence class $\ell_{t_0+s}(\zeta)$ in $\mathfrak N_\XX$.
The Jacobi vector field of this family at the point $x=\beta(t_0)=\gamma_0(0)$
is then 
\begin{equation}
\label{Jacobi}
J(x)={\left.\frac{d}{ds}\right|}_{s=0} \gamma_s(0)=\dot\beta(t_0).
\end{equation}
Take a spacelike Cauchy surface $M$ passing through the point $x=\beta(t_0)$
and let $\alpha_M$ be the associated contact form on $\mathfrak N_\XX$.
Plugging~\eqref{Jacobi} into the formula for $\alpha_M(\mathbf{v})$ on p.~381 in \cite{ChNe4}, 
we obtain
\begin{equation}
\label{evalalpha}
\alpha_M(\mathbf{v}) = \frac{\bigl\langle \dot\gamma_0(0),\dot\beta(t_0)\bigr\rangle}{\bigl\langle \dot\gamma_0(0), n_M(x)\bigr\rangle},
\end{equation}
where $n_M$ is the future-pointing unit normal vector to~$M$.

A vector in a time-oriented Lorentz vector space is future-pointing (respectively, future-pointing timelike)
if and only if its scalar product with every future-pointing null
vector is non-negative (respectively, positive). Thus, the denominator in~\eqref{evalalpha} 
is positive. Furthermore, since $\ell_{t_0}:S\to\mathfrak N_\XX$ parametrises 
the sky of $x=\beta(t_0)$, the tangent vector $\dot\gamma_0(0)$ 
runs through all null directions in $C^\uparrow_x$ as $\zeta\in S$ varies.
Hence, $\alpha_M(\mathbf{v})\ge 0\; (\text{respectively, }>0)$ for all $\mathbf{v}=\mathbf{v}(t_0,\zeta)$ 
if and only if $\dot\beta(t_0)$ is future-pointing (respectively, future-pointing timelike).
\end{proof}

\begin{cor}
\label{caus2order}
$x\le y\Longrightarrow \mathfrak S_x\lle\mathfrak S_y$ and $x\ll y\Longrightarrow \mathfrak S_x\llcurly\mathfrak S_y$
\end{cor}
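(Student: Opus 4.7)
The plan is to derive the corollary directly from the preceding proposition by producing, for each causal/chronological relation on $\XX$, an isotopy of skies obtained by translating along the connecting curve.

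First I would dispose of the trivial case. If $x=y$, then $\mathfrak{S}_x=\mathfrak{S}_y$ and the constant Legendrian isotopy is non-negative, giving $\mathfrak{S}_x\lle\mathfrak{S}_y$. So it suffices to handle $x<y$ strict, and the chronological case $x\ll y$.

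Next, I would take a future-directed curve $\beta:[0,1]\to\XX$ from $x$ to $y$ (timelike, if $x\ll y$). The composition $t\mapsto \mathfrak{S}_{\beta(t)}$ is a continuous path in the space of Legendrians, and on each smooth piece of $\beta$ it is a smooth Legendrian isotopy, to which the preceding proposition applies: non-negative if $\beta$ is future-directed, positive if $\beta$ is future-directed timelike. Concatenating the finitely many smooth pieces of $\beta$ yields a non-negative (respectively, positive) Legendrian isotopy from $\mathfrak{S}_x$ to $\mathfrak{S}_y$, so $\mathfrak{S}_x\lle\mathfrak{S}_y$ (respectively, $\mathfrak{S}_x\llcurly\mathfrak{S}_y$).

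The only possible technical obstacle is the non-smoothness of $\beta$ at finitely many parameters: the preceding proposition is stated for smooth curves, while causal and chronological relations on $\XX$ are traditionally defined through piecewise smooth curves. This is a mild issue resolved in one of two ways. Either one smooths $\beta$ while preserving the relevant tangent-cone condition (easy in the timelike case, since the future timelike cone is open; in the causal case one can first replace $\beta$ by a timelike curve on any interior subinterval using strong causality, then use limits at the endpoints), or one simply invokes transitivity of $\lle$ and $\llcurly$ already noted after the definition to concatenate the smooth pieces. Either way the conclusion is immediate.
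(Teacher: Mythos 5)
Your proposal is correct and is essentially the argument the paper intends: the corollary is stated without proof as an immediate consequence of the preceding proposition, obtained by applying it to a future-directed (respectively, future-directed timelike) curve joining $x$ to $y$ and reading off that the induced isotopy of skies is non-negative (respectively, positive). Your extra care about the trivial case $x=y$ and about concatenating the smooth pieces of a piecewise smooth causal curve via transitivity of $\lle$ and $\llcurly$ is a legitimate way to fill in the routine details.
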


The converse implications do not hold for certain (somewhat special)
spacetimes, see e.g.~\cite[Example 10.5]{ChNe2}.
This problem does not occur if the Legendrian isotopy 
class of skies is orderable.

\begin{prop}
\label{order2caus}
Suppose that the Legendrian isotopy class of skies in $\mathfrak N_\XX$
is orderable.
Then $ \mathfrak S_x\lle\mathfrak S_y\Longrightarrow x\le y$ and $\mathfrak S_x\llcurly\mathfrak S_y\Longrightarrow x\ll y$.
\end{prop}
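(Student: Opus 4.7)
The plan is to argue both implications by contradiction, exploiting orderability; I describe the strict version, and the non-strict one $\mathfrak S_x\lle\mathfrak S_y\Rightarrow x\le y$ proceeds by the same scheme applied to the perturbed pairs $(\tau_{-\eps}(\mathfrak S_x),\tau_\eps(\mathfrak S_y))$ and then letting $\eps\to 0$. So assume $\mathfrak S_x\llcurly\mathfrak S_y$ but $x\not\ll y$; the goal is to construct a non-trivial positive Legendrian loop at $\mathfrak S_x$, contradicting orderability (any two distinct points on such a loop violate antisymmetry of $\lle$).

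First, I dispose of the subcase $y\le x$: Corollary~\ref{caus2order} gives $\mathfrak S_y\lle\mathfrak S_x$, while the hypothesis gives $\mathfrak S_x\lle\mathfrak S_y$, so antisymmetry of $\lle$ forces $\mathfrak S_x=\mathfrak S_y$. Since distinct points in a chronological (a fortiori globally hyperbolic) spacetime have distinct skies, this yields $x=y$, and then the hypothesis itself, $\mathfrak S_x\llcurly\mathfrak S_x$, is a positive loop based at $\mathfrak S_x$, contradicting orderability.

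The remaining subcase is $x\not\ll y$ together with $y\not\le x$. Using global hyperbolicity --- $J^+$ is closed and $\ll$ is open in $\XX\times\XX$ --- I pick $x^+\in I^+(x)$ and $y^-\in I^-(y)$ arbitrarily close to $x$ and $y$, such that $x^+$ and $y^-$ are acausally separated (so neither causally relates to the other). By Bernal--S\'anchez, there is a smooth spacelike Cauchy hypersurface $M\subset\XX$ containing both $x^+$ and $y^-$, and via the contactomorphism $\rho_M:\mathfrak N_\XX\overset{\cong}{\longrightarrow} ST^*M$ the skies $\mathfrak S_{x^+}$ and $\mathfrak S_{y^-}$ identify with the Legendrian fibres $F_{x^+},F_{y^-}\subset ST^*M$.

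The main step, and the principal obstacle, is then to construct a positive Legendrian isotopy from $F_{y^-}$ back to $F_{x^+}$; concatenating it with the bridging isotopies $\mathfrak S_x\llcurly\mathfrak S_{x^+}$ and $\mathfrak S_{y^-}\llcurly\mathfrak S_y$ supplied by Corollary~\ref{caus2order}, together with the hypothesised $\mathfrak S_x\llcurly\mathfrak S_y$, yields the desired positive loop at $\mathfrak S_x$. A natural recipe is to interpolate fibre-by-fibre along a smooth path in $M$ from $y^-$ to $x^+$ (a Legendrian isotopy of \emph{a priori} sign-indefinite contact Hamiltonian) and to dominate the negative part by a sufficiently large Reeb shift $\tau_R$; the delicate quantitative step is to arrange the magnitude $R$ and the Reeb slack of the outer isotopies so that the composite is \emph{strictly} positive throughout and closes up to form a genuine positive loop --- this is the part that essentially uses global hyperbolicity and the dimension condition on $M$, and is the point at which the argument might require a more careful reduction (for instance, approximating the fibre-path isotopy by small Reeb wiggles whose positive spikes strictly dominate their negative ones pointwise).
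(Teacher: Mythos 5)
Your argument has a fatal gap at what you yourself flag as ``the main step'': constructing a positive Legendrian isotopy from $\mathfrak S_{y^-}$ to $\mathfrak S_{x^+}$ for two acausally separated points lying on a common Cauchy surface. This is not a delicate quantitative issue to be fixed by a ``more careful reduction'' --- it is impossible under the standing hypothesis, and in fact its possibility is exactly what the proposition denies. If such an isotopy existed for every acausally separated pair on a Cauchy surface $M$, then for any two such points $p,q$ you would get $\mathfrak S_p\llcurly\mathfrak S_q$ and $\mathfrak S_q\llcurly\mathfrak S_p$, hence a positive loop, so the sky class would never be orderable and the proposition would be vacuous. The specific recipe also cannot work: moving fibre-by-fibre along a path in $M$ gives a sign-indefinite isotopy, and composing with a large Reeb shift $\tau_R$ makes the isotopy from $F_{y^-}$ to $\tau_R(F_{x^+})$ positive but changes the endpoint; there is no way to come back down from $\tau_R(F_{x^+})$ to $F_{x^+}$ positively. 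If one could always ``positivize'' an isotopy with fixed endpoints (by Reeb wiggles or otherwise), no Legendrian isotopy class would ever be orderable. So the contradiction you aim for cannot be manufactured this way, and (separately) the pieces you list do not even concatenate into a loop based at $\mathfrak S_x$: you have $\mathfrak S_x\llcurly\mathfrak S_y$ and $\mathfrak S_{y^-}\llcurly\mathfrak S_y$, but nothing returning from $\mathfrak S_y$ toward $\mathfrak S_x$.

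You have also inverted where the real content lies. The substantive half is the \emph{non-strict} implication $\mathfrak S_x\lle\mathfrak S_y\Rightarrow x\le y$, which the paper imports from \cite[Proposition~1.3]{ChNe3}; that result rests on the structure theory of non-negative Legendrian isotopies of skies (the Legendrian Low conjecture circle of ideas) and is not something one rederives by an $\eps$-perturbation of the strict case. The paper then obtains the strict implication as an easy consequence: if $\mathfrak S_x\llcurly\mathfrak S_y$, then by openness of $\llcurly$ in the $C^\infty$-topology one has $\mathfrak S_{x'}\llcurly\mathfrak S_y$, hence $x'\le y$, for all $x'$ near $x$; choosing $x'$ with $x\ll x'$ and invoking the push-up property $x\ll x'\le y\Rightarrow x\ll y$ finishes the argument. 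Your proposal contains no substitute for the imported non-strict statement, so even setting aside the broken loop construction, the proof is incomplete. (Your limiting argument \emph{from} the strict case \emph{to} the non-strict one, using closedness of the causal relation in a globally hyperbolic spacetime, would be fine if the strict case were established independently --- but it is not.)
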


\begin{proof}
The claim about $\le$ and $\lle$ was proved in~\cite[Proposition 1.3]{ChNe3}.
If~$\mathfrak S_x\llcurly\mathfrak S_y$, then $\mathfrak S_{x'}\llcurly\mathfrak S_y$ for
all points $x'$ in a small neighbourhood of $x$ in~$\XX$ because
the sky depends smoothly on the point and $\llcurly$ is open in
the smooth topology. So $x'\le y$ for all such points by
the previous case. Hence, we can pick $x'$ so that $x\ll x'$
and $x'\le y$. This implies that $x\ll y$ by \cite[Proposition 2.18]{Pe}.
\end{proof}

\begin{cor}
\label{Int2Alex}
Suppose that the Legendrian isotopy class of skies in $\mathfrak N_\XX$
is orderable. The interval topology on this Legendrian isotopy class induces the
usual manifold topology on $\XX$ via the embedding $x\mapsto\mathfrak S_x$.
\end{cor}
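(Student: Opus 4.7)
\medskip\noindent\textbf{Proof plan.}
The plan is to factor the problem through Proposition~\ref{AlexHaus}: since $\XX$ is globally hyperbolic, it is strongly causal, so the Alexandrov topology on~$\XX$ already coincides with the manifold topology. It therefore suffices to identify the topology pulled back to $\XX$ under the embedding $x\mapsto\mathfrak S_x$ with the Alexandrov topology. This splits the task into two inclusions of topologies on~$\XX$.

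For the inclusion \emph{Alexandrov $\subseteq$ pullback}, I would combine Corollary~\ref{caus2order} with Proposition~\ref{order2caus}: under the orderability hypothesis they together give the equivalence $x\ll y \Leftrightarrow \mathfrak S_x\llcurly\mathfrak S_y$. Consequently the basic chronological diamond $\{z\in\XX\mid x\ll z\ll y\}$ coincides exactly with the preimage of the interval $(\mathfrak S_x,\mathfrak S_y)$ under the sky map, and is therefore open in the pullback topology by definition.

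For the reverse inclusion I would take an arbitrary basic interval $(A,B)$ in the sky class and show that $U:=\{x\in\XX\mid A\llcurly\mathfrak S_x\llcurly B\}$ is open in the manifold topology. Fixing $x_0\in U$, the map $x\mapsto\mathfrak S_x$ is smooth---most transparently through any spacelike Cauchy surface $M\ni x_0$ and the contactomorphism $\rho_M:\mathfrak N_\XX\to ST^*M$ that sends skies of points of $M$ to cotangent fibres---so points near $x_0$ in the manifold topology produce skies near $\mathfrak S_{x_0}$ in the $C^\infty$-topology. The openness of $\llcurly$ in the $C^\infty$-topology (the very fact invoked in the proof of Proposition~\ref{order2caus}) then gives a manifold neighbourhood of $x_0$ on which $A\llcurly\mathfrak S_x\llcurly B$ persists, so $U$ is manifold-open.

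The step I expect to be the true load-bearing one is the openness of $\llcurly$ in the $C^\infty$-topology: it is a standard fact proved by truncating a given positive isotopy near its endpoints and concatenating with small positive isotopies obtained, in the local jet model of Example~\ref{PosInJets}, from Reeb-shift plus pointwise comparison of generating functions. Since the authors already treat this as well known and use it implicitly in Proposition~\ref{order2caus}, I would simply appeal to that usage. Combining the two inclusions with Proposition~\ref{AlexHaus} then delivers the required identification of the pullback topology with the manifold topology on~$\XX$.
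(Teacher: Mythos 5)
Your proof is correct and follows essentially the same route as the paper: combine Corollary~\ref{caus2order} with Proposition~\ref{order2caus} to identify the pulled-back topology with the Alexandrov topology, then invoke strong causality of a globally hyperbolic spacetime and Proposition~\ref{AlexHaus}. The paper's version is terser and does not spell out your second inclusion (that preimages of arbitrary intervals $(A,B)$ are manifold-open), but that step is exactly the observation, already made in \S\ref{DefsProps}, that the interval topology is rougher than the $C^\infty$-topology, combined with the smooth dependence of $\mathfrak S_x$ on $x$, so your added detail is a legitimate filling-in rather than a different argument.
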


\begin{proof}
It follows from Corollary~\ref{caus2order} and Proposition~\ref{order2caus}
that the interval topology induces the Alexandrov topology on $\XX$.
A globally hyperbolic spacetime is strongly causal, so the Alexandrov
topology coincides with the manifold topology by Proposition~\ref{AlexHaus}.
\end{proof}

\begin{rem}
The Legendrian isotopy class of skies or, equivalently, the Legendrian
isotopy class of the fibre of $ST^*M$ for a Cauchy surface $M\subset\XX$
is orderable if the universal cover $\widetilde M$ is non-compact by \cite[Remark 8.2]{ChNe2} 
or the integral cohomology ring of $\widetilde M$ is not isomorphic to
that of a compact rank one symmetric space by \cite[Theorem 1.2]{FrLaSch}
combined with~\cite[Proposition 4.7]{ChNe3}.
In the remaining cases, one can use the fact that this Legendrian
isotopy class is always {\it universally\/} orderable by~\cite[Theorem~1.1]{ChNe3}
and obtain a substitute for Corollary~\ref{Int2Alex} 
by considering the map $\widetilde x\mapsto\widetilde{\mathfrak S}_{\widetilde x}$ 
from the (finite) universal cover $\widetilde\XX$ of the spacetime $\XX$ 
to the universal cover of the Legendrian isotopy class of skies 
in $\mathfrak N_{\widetilde\XX}$, cf.~\cite[\S 1.2]{ChNe3}. 
\end{rem}

\subsection{Interval completion of a globally hyperbolic spacetime}
Let now $\XX$ be a globally hyperbolic spacetime such that

\medskip
\noindent
$(*)$\hspace{0.06\textwidth}
\parbox{0.8\textwidth}{the interval topology is Hausdorff on the Legendrian\break 
isotopy class in $\mathfrak{N}_\XX$ containing the skies of points in~$\XX$.}

\medskip
\noindent
In particular, this Legendrian isotopy class is orderable by Proposition~\ref{Haus2order}.
Theorem~\ref{stcovrn} guarantees that condition $(*)$ is satisfied if a
smooth spacelike Cauchy surface $M\subset\XX$ is smoothly covered by 
an open subset of $\R^n$, and Remark~\ref{dim2and3} shows that this 
assumption is not too restrictive for $(2+1)$- and $(3+1)$-dimensional spacetimes.

Let us define the {\it interval completion\/} of $\XX$ by setting
$$
\widehat\XX := \overline{\{\mathfrak{S}_x \mid x\in\XX\}},
$$
where the closure is taken with respect to the interval topology
on the Legendrian isotopy class of skies in~$\mathfrak N_\XX$.
In other words, a point in $\widehat\XX$ is a Legendrian sphere 
in the space of null geodesics such that there is a sequence 
of skies of points in~$\XX$ converging to it in the interval
topology. The {\it interval boundary\/} of $\XX$ is the
difference 
$$
\p\XX := \widehat\XX - \{\mathfrak{S}_x \mid x\in\XX\}.
$$

The definition of $\widehat\XX$ and the results collected 
in the previous subsection have the following immediate
consequences:

\begin{enumerate}
\item $\widehat\XX$ is a Hausdorff topological space.
\item The map $x\mapsto\mathfrak{S}_x$ is an open embedding of $\XX$ into $\widehat\XX$.
\item The relations $\llcurly$ and $\lle$ on $\widehat\XX$ restrict to $\ll$ and $\le$ on $\XX$.
\item Every causal isomorphism $f:\XX\to\XX'$ extends to a causal isomorphism $\widehat{f}:\widehat{\XX}\to\widehat{\XX'}$.
\end{enumerate}

Another basic corollary is that a point of $\widehat\XX$ lies in $\p\XX$
if (and, obviously, only if) it doesn't have either a past or a future in~$\XX$.

\begin{prop}
If $L\in\p\XX$, then at most one of the sets
$$
I_\XX^+(L):=\{x\in\XX\mid L\llcurly\mathfrak{S}_x\}
\quad\text{and}\quad
I_\XX^-(L):=\{x\in\XX\mid \mathfrak{S}_x\llcurly L\}
$$
is nonempty.
\end{prop}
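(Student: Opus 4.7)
The plan is a proof by contradiction: assume both $I_\XX^+(L)$ and $I_\XX^-(L)$ are nonempty and deduce that $L$ coincides with the sky of some point of $\XX$, contradicting $L\in\p\XX$.

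First I would pick $x^\pm\in I_\XX^\pm(L)$, so that $L$ lies in the open interval $U:=(\mathfrak{S}_{x^-},\mathfrak{S}_{x^+})$. The key geometric step is to convert membership in $U$ into a constraint inside a compact subset of $\XX$. Indeed, if $\mathfrak{S}_z\in U$ for some $z\in\XX$, then $\mathfrak{S}_{x^-}\llcurly\mathfrak{S}_z\llcurly\mathfrak{S}_{x^+}$. Hypothesis $(*)$ forces the Legendrian isotopy class of skies to be orderable by Proposition~\ref{Haus2order}, so Proposition~\ref{order2caus} yields $x^-\ll z\ll x^+$, whence $z\in I^+(x^-)\cap I^-(x^+)$. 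Since $\XX$ is globally hyperbolic, the causal diamond $K:=J^+(x^-)\cap J^-(x^+)$ is compact in $\XX$, and every such $z$ lies in $K$.

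Next I would use compactness and Hausdorffness to close up the sky map. The map $x\mapsto\mathfrak{S}_x$ is continuous from the manifold topology on $\XX$ into the smooth topology on Legendrians, and the interval topology is coarser than the smooth one. Restricting to the compact set $K$ yields a subset $\{\mathfrak{S}_x:x\in K\}$ that is compact in the interval topology, hence closed because $(*)$ guarantees that topology is Hausdorff.

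It remains to show $L\in\overline{\{\mathfrak{S}_x:x\in K\}}$. For any open neighbourhood $W$ of $L$ in the interval topology, $W\cap U$ is an open neighbourhood of $L$, and by the very definition of $\widehat\XX$ as a closure it must contain some sky $\mathfrak{S}_z$; by the opening paragraph, $z\in K$. Thus $L$ is adherent to the closed set $\{\mathfrak{S}_x:x\in K\}$, so $L=\mathfrak{S}_{z^*}$ for some $z^*\in K\subset\XX$, contradicting $L\in\p\XX$. The only nontrivial ingredient is the interpretation of $\mathfrak{S}_z\in U$ as $z\in K$, which I expect to be routine given Proposition~\ref{order2caus} together with the orderability supplied by~$(*)$; the rest of the argument is a standard image-of-a-compact-set-is-closed manoeuvre.
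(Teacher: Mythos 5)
Your proof is correct and follows essentially the same route as the paper's: both arguments use Proposition~\ref{order2caus} (via the orderability supplied by $(*)$ together with Proposition~\ref{Haus2order}) to trap the skies approximating $L$ inside the compact causal diamond of $x^-$ and $x^+$, and then derive a contradiction from the Hausdorffness of the interval topology. The only difference is cosmetic: the paper extracts a convergent subsequence and invokes uniqueness of limits, whereas you phrase the same compactness step as ``the continuous image of a compact set is closed in a Hausdorff space'', which is, if anything, slightly more robust because it does not rely on the sequential description of the closure defining $\widehat\XX$.
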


\begin{proof}
Suppose that there exist $x_\pm\in I_\XX^\pm(L)$.
Let $x_n$, $n\to\infty,$ be a sequence of points in $\XX$
such that their skies converge to $L$ in the interval
topology. Then $\mathfrak{S}_{x_-}\llcurly \mathfrak{S}_{x_n}\llcurly \mathfrak{S}_{x_+}$
for all large $n$. Hence, $x_-\ll x_n\ll x_+$ by Proposition~\ref{order2caus}
and therefore $x_n$ is contained in the compact causal segment
$\{z\in\XX\mid x_-\le z\le x_+\}$. Thus, there is a subsequence
converging to a point $x\in\XX$ . The corresponding sequence
of skies will converge to $\mathfrak{S}_x\ne L$, which
contradicts the uniqueness of limit in a Hausdorff space.
\end{proof}

Finally, let us show that the interval boundary differs from the classical causal boundary~\cite[\S 6.8]{HaEl}
and from the boundary defined by Low~\cite[\S 6]{Lo3} already in the simplest example.

\begin{prop}
Let $\XX=\R^{1,n}$ be the flat Minkowski spacetime.  
Then $\widehat\XX=\XX$ and $\p\XX=\varnothing$.
\end{prop}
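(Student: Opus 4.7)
The plan is to exploit the very explicit description of skies in flat Minkowski space through the generating function machinery of~\S\ref{minimax}. Fix the Cauchy surface $M=\{t=0\}\cong\R^n$ and compose the Penrose contactomorphism $\rho_M:\mathfrak N_\XX\overset{\cong}{\longrightarrow}ST^*\R^n$ with the inverse hodograph $\Jet^1(S^{n-1})\overset{\cong}{\longrightarrow}ST^*\R^n$ to identify the Legendrian isotopy class of skies with $\Leg(\Lambda^0)\subset\Jet^1(S^{n-1})$, thereby making the minimax invariants $c_\pm$ of~\S\ref{minimax} available.

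The first and key step is to verify that under this identification the sky of $(t,\vec x)\in\R^{1,n}$ corresponds to the Legendrian graph $\Lambda^{f_{t,\vec x}}$ of the $1$-jet of the affine function
\[
f_{t,\vec x}(\vec v)=t+\vec x\cdot\vec v,\qquad \vec v\in S^{n-1},
\]
which is the support function (for the appropriate co-orientation) of the sphere of radius $|t|$ about $\vec x$ that the light cone through $(t,\vec x)$ traces on the slice $\{t=0\}$. Example~\ref{graphs} then yields at once
\[
c_-(\mathfrak S_{(t,\vec x)})=t-|\vec x|\qquad\text{and}\qquad c_+(\mathfrak S_{(t,\vec x)})=t+|\vec x|.
\]

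With these formulas in hand, the rest is soft. Given $L\in\widehat\XX$, pick a sequence $x_n=(t_n,\vec x_n)\in\R^{1,n}$ with $\mathfrak S_{x_n}\to L$ in the interval topology and a basic interval neighbourhood $(A,B)$ of $L$; then $A\llcurly\mathfrak S_{x_n}\llcurly B$ for all large $n$, and the monotonicity of $c_\pm$ along non-negative Legendrian isotopies (\cite[Lemma~5.2]{ChNe1}) gives $c_\pm(A)\le c_\pm(\mathfrak S_{x_n})\le c_\pm(B)$. Hence $t_n+|\vec x_n|$ and $t_n-|\vec x_n|$ are both bounded, so $\{(t_n,\vec x_n)\}$ is itself bounded in $\R^{1,n}$. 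Extracting a convergent subsequence $x_{n_k}\to x_\infty\in\R^{1,n}$ and using the continuity of the sky map (Corollary~\ref{Int2Alex}) together with Hausdorffness of the interval topology, one concludes $L=\mathfrak S_{x_\infty}$. This shows $\widehat\XX=\{\mathfrak S_x\mid x\in\XX\}$ and $\p\XX=\varnothing$.

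The hard part will be the explicit identification of skies with Legendrian graphs of support functions in Step~1. The result is essentially recorded in \cite[\S 6]{ChNe1} and \cite{CFP}, but one needs to track the sign and orientation conventions built into the hodograph contactomorphism and into the co-orientation of a sky by future-pointing null directions; the compatibility $\tau_r(\mathfrak S_{(t,\vec x)})=\mathfrak S_{(t+r,\vec x)}$ forces $f_{t+r,\vec x}=f_{t,\vec x}+r$ and pins down the time-dependence, after which the formula $c_\pm=t\pm|\vec x|$ follows regardless of the residual sign in the spatial variable. Once Step~1 is secured, the bounded-subsequence argument is a direct application of the $c_\pm$-machinery and Hausdorffness.
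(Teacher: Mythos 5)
Your proposal is correct and follows essentially the same route as the paper: identify skies with Legendrian lifts of spheres via $\rho_M$ and the hodograph contactomorphism, compute $c_\pm(\mathfrak S_{(t,\vec x)})=t\pm|\vec x|$ (the paper leaves this as the "straightforward computation" and only records $|c_+|+|c_-|\to\infty$), and use monotonicity of $c_\pm$ to bound any interval-convergent sequence of base points. You additionally spell out the concluding compactness/Hausdorffness step, which the paper leaves implicit; this is a faithful completion, not a different argument.
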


\begin{proof}
Consider the Cauchy surface $M=\{0\}\times\R^n\subset\R^{1,n}$ and the associated
contactomorphism
$$
\rho_M:\mathfrak{N}_{\R^{1,n}}\overset{\cong}{\longrightarrow} ST^*\R^n.
$$
By definition, $\rho_M$ maps the sky of a point $(t,y)\in \R^{1,n}$ 
to the fibre $ST_y^*\R^n$ for $t=0$ and to the Legendrian lift of the 
$(n-1)$-sphere $S(y,|t|)=\{y'\in\R^n \mid \|y'-y\|=|t|\}$ co-oriented inwards for $t<0$ and outwards for~$t>0$. 
If $(t,y)\to\infty$ in $\R^{1,n}$, then a straightforward 
computation shows that the minimax invariants $c_\pm$ 
of the image of $\rho_M(\mathfrak{S}_{(t,y)})$ under 
the hodograph contactomorphism
$$
ST^*\R^n \overset{\cong}{\longrightarrow} \Jet^1(S^{n-1})
$$
satisfy $|c_+|+|c_-|\to\infty$, see~\cite[\S 6]{ChNe1}. 
Hence, the skies of such points cannot be contained 
in any fixed interval by the monotonicity of $c_\pm$. 
\end{proof}

\end{document}